\documentclass{amsart}
\usepackage[utf8]{inputenc}

\title{Non-expansive matrix number systems with bases similar to $J_n(1)$}
\author{Joshua W. Caldwell}
\email{joshua.caldwell@uwaterloo.ca}
\thanks{Research of J. Caldwell was supported in part by the University of Waterloo}
\address{Department of Pure Mathematics, University of Waterloo, Waterloo, Ontario, Canada N2L 3G1}
\author{Kevin G. Hare}
\email{kghare@uwaterloo.ca}
\address{Department of Pure Mathematics, University of Waterloo, Waterloo, Ontario, Canada N2L 3G1}
\author{Tom\'a\v s V\'avra}
\email{tvavra@uwaterloo.ca}
\address{Department of Pure Mathematics, University of Waterloo, Waterloo, Ontario, Canada N2L 3G1}
\thanks{Research of T. V\'avra and K.G. Hare was supported in part by NSERC Grant 2019-03930}

\usepackage{enumitem}
\usepackage{graphicx}
\usepackage{amssymb,latexsym}
\usepackage{amsmath}
\usepackage{amsthm}
\usepackage{amssymb}
\usepackage{color, soul}
\usepackage{hyperref}
\usepackage[dvipsnames]{xcolor}
\usepackage{comment}
\usepackage{tikz-cd}
\usepackage{multicol}
\usepackage{enumitem}
\usepackage{todonotes}

\usepackage{color}
\newtheorem{thm}{Theorem}[section]
\newtheorem{lemma}[thm]{Lemma}
\newtheorem{prop}[thm]{Proposition}
\newtheorem{example}[thm]{Example}
\newtheorem{conj}[thm]{Conjecture}
\newtheorem{remark}[thm]{Remark}
\newtheorem{corollary}[thm]{Corollary}
\newtheorem{definition}[thm]{Definition}

\def\Z{\mathbb Z}
\def\R{\mathbb R}

\def\N{\mathbb N}

\newcommand{\Jtwo}{\begin{pmatrix}1 & 1 \\ 0 & 1 \end{pmatrix}}

\newcommand{\vtwopc}{(0,  1)^T}

\newcommand{\vtwomc}{(0 , -1)^T}

\newcommand{\vthreepc}{(0, 0, 1)^T}

\newcommand{\vthreemc}{(0, 0, -1)^T}

\newcommand{\vfourpc}{(0, 0, 0, 1)^T}

\newcommand{\D}{\mathcal{D}}

\makeatletter
\renewcommand*\env@matrix[1][*\c@MaxMatrixCols c]{%
   \hskip -\arraycolsep
   \let\@ifnextchar\new@ifnextchar
   \array{#1}}
\makeatother

\newsavebox{\mybox}
\begin{lrbox}{\mybox}
$ \left[ \begin{array}{rr}1 & k \\
0 & 1 \end{array} \right]$
\end{lrbox}

\newsavebox{\m}
\begin{lrbox}{\m}
$ \left[ \begin{array}{rr}1 & 1 \\
0 & 1 \end{array} \right]$
\end{lrbox}

\newsavebox{\nm}
\begin{lrbox}{\nm}
$ \left[ \begin{array}{rr}-1 & 1 \\
0 & -1 \end{array} \right]$
\end{lrbox}

\newsavebox{\negativematrix}
\begin{lrbox}{\negativematrix}
$ \left[ \begin{array}{rr}-1 & k \\
0 & -1 \end{array} \right]$
\end{lrbox}

\newsavebox{\matrixone}
\begin{lrbox}{\matrixone}
$ \left( \begin{array}{rr}1 & 1 \\
0 & 1 \end{array} \right)$
\end{lrbox}

\begin{document}
\begin{abstract}
We study representations of integral vectors in a number system with a matrix base $M$ and vector digits. We focus on the case when $M$ is similar to $J_n$, the Jordan block of $1$ of size $n$. If $M=J_2$, we classify digit sets of size 2 allowing representation of the whole $\Z^2$. For $J_n$ with $n\geq 3$, it is shown that three digits suffice to represent all of $\Z^n$. For bases similar to $J_n$, at most $n$ digits are required, with the exception of $n=1$. Moreover, the language of strings representing the zero vector with $M=J_2$ and the digits $(0,\pm 1)^T$ is shown not to be context-free, but to be recognizable by a Turing machine with logarithmic memory.
\end{abstract}
\maketitle




\section{Introduction}

We study representations of integer vectors as combinations $\sum_{i=0}^n M^i a_i$ where the basis $M\in\Z^{n\times n}$ is an integral matrix and the digits $a_i$ take values in a finite set $\mathcal D\subset\Z^n.$ If every element of $\Z^n$ can be represented this way, 
we call the digit set $\mathcal D$ \emph{complete} and
the pair $(M,\mathcal D)$ a \emph{full number system}. 
If every element in $\Z^n$ has a unique representation, 
the pair $(M, \mathcal{D})$ is called a \emph{Canonical number system}.
Such representations can be traced back to the work of Vince~\cite{Vince}, who studied mainly the connection to tessellations of $\R^n.$ It can be easily shown that in order for every point of $\Z^n$ to have a representation with a finite digit set, the basis $M$ must have no eigenvalues strictly inside the unit disc. Vince showed that expanding matrices, i.e. those with all eigenvalues $>1$ in modulus, have a complete digit set. His work has been then expanded to rational matrices and to include also bases with eigenvalues on the unit circle in \cite{JanThu}. If one allows to use also the negative powers of the basis, complete digit sets exist for all non-singular matrices as has been recently shown in~\cite{PeVa}.

Our focus is to study more deeply a relatively simple, but intriguing, case when $M$ is similar to $J_n$, a Jordan block of the number $1$ of dimension $n$. Although such matrices are shown in \cite{JanThu} to have a digits set $\mathcal D$ such that $(M, \mathcal D)$ is a full number system, 
their method is not meant to be constructive, and the digit set one might extract from the proofs is too large. 
In contrast, we give a much tighter result on the size of $\mathcal{D}$ when 
the matrix is $J_n$.
We show that for any dimension $n$, and matrix $M = J_n$, that
there exists an explicit digits set $\D$ of size three such that $(M, \mathcal D)$ is a full number system.
For bases similar to $J_n,$ at most $n$ digits are needed for a full number system.

Let $\D \subset \Z^n$ finite and $M \in \Z^{n\times n}$.
We will say that a vector $v \in \Z^n$ has a representation in the number system
    base $M$ and digit set $\D$ if there exists a finite sequence $a_k, a_{k-1}, \dots, a_0$
    with each $a_{i} \in \D$ where
    \[ v = M^k a_{k} + \dots + M^1 a_{1} + a_{0}. \]
We will denote this by $[a_{k} \dots a_{1} a_{0}]_M$.
When the base is understood we will often denote this by $[a_{k} \dots a_{1} a_{0}]$.

There are many interesting questions that arise when studying generalized number systems.
This includes:
\begin{itemize}
\item Can we represent all integer vectors?
\item Are such representations unique?
\item Is there an algorithm to find a representation?
\item What is the minimal size of a digit set needed to represent all vectors?
\item Which digit sets are complete for a given base?
\end{itemize}

A very common setting is the one where we can represent numbers (or polynomials) uniquely.
Recall, a system $(M, \D)$ is a {\em Canonical number system} if for all vectors $v \in \Z^n$
    there exists a unique representation $v = [a_k a_{k-1} \dots a_0]_M$ with all $a_i \in \D$.
If $M$ is an expansive matrix, and $\D$ chosen such that $(M, \D)$ is a canonical number system,
    then $\D$ contains all residue classes $\bmod\ M$, whence $\#(\D) = |\det(M)|$.
In this case we have the property that
    for all $v \in \Z^n$ there exists a unique $d \in \D$ such that $M^{-1} (v-d) \in \Z^n$.
This uniqueness allows one to construct the unique representation of a vector $v \in \Z^n$ by the backward division algorithm.
Let $v_0 = v$ be the vector we wish to represent.
We find the unique $d \in \D$ such that 
    $M^{-1}(v-d) \in \Z^n$ and we set $a_0$ to be this digit, and $v_1 = M^{-1}(v_0-d)$.
We iteratively construct $v_k = M^{-1}(v_{k-1}-d)$
    in this manner until such time as $v_k \in \D$.
The representation is then $v=[a_0 a_1 a_2 \dots a_k]_M$.
Although the standard approach is to study representations through the transformations that generate them (such as $M^{-1}(v-d)$), we take a different path arising from Lemma~\ref{lem:valid}.

Our setting should be seen in the following context. Canonical number systems require uniqueness, and not all bases admit an appropriate digit set. This theory has been developed in various settings, such as algebraic, polynomial, or matrix bases, see for instance \cite{AKovacs, BKovacs, Petho}. When uniqueness of representations is not required, under certain natural conditions, we can still find a digit set, see \cite{AkThZa} for algebraic bases, or \cite{JanThu} for matrices, however the sizes of the digit sets provided are usually very far from the optimum. Our results lie in the second setting, and we will try to get close to the optimal size for a special class of bases.
    
Our focus is on a class of integer matrices that are not expansive matrices.
In particular, integer matrices such that all eigenvalues $\lambda = 1$.
Particular focus will be given to the $n \times n$ Jordan block with eigenvalue $1$ of the form
\[
J_n :=
\begin{pmatrix}
1 & 1 & 0 & 0& \dots & 0 \\
0 & 1 & 1 & 0& \dots & 0 \\
\vdots & \ddots & \ddots & \ddots& \ddots &\vdots \\
0 & \dots & 0 & 1 & 1 & 0\\
0 & \dots & 0 & 0 & 1 & 1\\
0 & \dots & 0 & 0 & 0 & 1
\end{pmatrix}.
\]
As all eigenvalues of $M$ have modulus $1$, we see that $M^{-1}$ is an integer matrix.

It is worth observing that if both $M$ and $M^{-1}$ are
    integer matrices (as is the case where all eigenvalues are $1$), then $(M, \D)$ cannot be a
    canonical number system, regardless of the choice of $\D$.
To see this, assume that $(M,\D)$ can be used to represent all $v \in \Z^n$.
Consider any non-zero $d \in \D$.
Find a representation such that $-M^{-1} d = [a_k \dots a_0]_M$.
We then see that $[a_n \dots a_0 d] = M (-M^{-1} d) + d = 0,$ the zero vector.
In particular, both $[a_n \dots a_0 d]$ and $[a_n \dots a_0 d a_n \dots a_0 d]$ are representations of the zero vector.
Hence the zero vector has multiple representations.
Further we see that $[a_k \dots a_0 d b_j \dots b_1 b_0]_M = [b_j \dots b_0]_M$
    for all representations $[b_j \dots b_0]_M$.

When there are multiple representations of the same vector, it may be useful to study the language all the strings representing zero. In the case of number systems with a real base, the complexity of the language was linked to the properties of rewriting rules, or to the properties of the so-called spectra of numbers, see \cite{FrPe, Frougny}. Nice properties have been linked to the languages recognizable by finite-state automata. In our case, the language will be more complex: not recognizable by a pushdown automaton, but recognizable by an automaton with memory of logarithmic size in the length of the input.

In Section \ref{sec:Examples} we provide some preliminary results and motivating examples.
We will start our study in Section \ref{sec:J2} by considering digit sets for the base $\Jtwo$. 
In particular, we give a complete characterization of 
    all size two digits sets $\D$ such that 
    $(J_2, \D)$ is a full number system.
This result can be seen as an instance of a generalization of R. Kenyon's open problem \cite{Kenyon} about digit sets for base $3$.
We consider the representations of the zero vector in Section \ref{sec:CFL}. 
It will be shown that language of representations of zero is not a context-free language, but belongs to the class of languages recognizable by a Turing machine with logarithmic memory.
In Section \ref{sec:Jd} we provide a 3 digit set $\D$ for the Jordan block of any dimension such that $(J_n, \D)$ is a full number system.
In Section \ref{sec:similar} we provide a $d$-digit set $\D$ for any matrix $M$ similar to a Jordan block of dimension $d$ such that $(M, \D)$ is a full number system.
Lastly in Section \ref{sec:conc} we make some concluding remarks as well as possible directions for further research.

\section{Preliminary results}
\label{sec:Examples}

In this section we will present some preliminary results and tools that will be used in this study.  We will also provide a number of examples.

An entry of a vector that is arbitrary and its value is not important will be denoted by $*$.
The major strength of the following result is that it allows us to verify a set $(M, \D)$ is an full number system by only verifying a finite number of criteria.

\begin{lemma}
\label{lem:valid}
Let $M := J_n$ be an $n\times n$ Jordan block of the form 
\[ M := J_n :=  \begin{pmatrix} 
1 & 1 & 0 & \dots & 0 \\
0 & 1 & 1 & \dots & 0 \\
\vdots & \ddots & \ddots & \ddots & \vdots \\
0 & \dots & 0 & 1 & 1 \\
0 & \dots & 0 & 0 & 1
\end{pmatrix}
\]
Let $\mathcal{D} = \{d_1, d_2, \dots, d_k\}$.
The set $(M, \D)$ is a full number system if and only if for all $1\leq j\leq n$ there
    exists representations of $4n$ vectors $(*, \dots ,*, A_j, 0 ,\dots, 0)^T$, $(*, \dots ,*, B_j, 0 ,\dots, 0)^T$, $(*, \dots ,*, C_j, 0 ,\dots, 0)^T$, and $(*, \dots ,*, D_j, 0 ,\dots, 0)^T$
    such that
\begin{enumerate}
    \item $A_j > 0 > B_j$, \label{first}
    \item $\gcd(C_j, D_j) = 1$. \label{second}
\end{enumerate}
\end{lemma}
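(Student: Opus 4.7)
The forward direction is immediate: if $(M,\mathcal{D})$ is a full number system, then $e_j$ and $-e_j$ are representable for every $j$, yielding the choice $A_j = C_j = 1$ and $B_j = D_j = -1$. The substance lies in the converse, which I will prove by induction on $j$ from $1$ to $n$, showing that every vector in the sublattice $V_j := \{v \in \Z^n : v_i = 0 \text{ for all } i > j\}$ is representable.

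Two structural facts drive the argument. First, $V_j$ is $M$-invariant, and since the restriction of $M$ to $V_j$ is $J_j$ (hence unimodular), $V_j$ is also $M^{-1}$-invariant. Second, for $u \in V_j$ with $j < n$, $(Mu)_j = u_j + u_{j+1} = u_j$, so $(M^k u)_j = u_j$ for all $k \geq 0$: powers of $M$ fix the $j$-th coordinate on $V_j$. The main computational tool is the concatenation identity
\[
[a_k \cdots a_0 \mid b_\ell \cdots b_0]_M \;=\; M^{\ell+1}\,[a_k \cdots a_0]_M + [b_\ell \cdots b_0]_M,
\]
iterated: if I concatenate representations of the four hypothesis vectors $v^A, v^B, v^C, v^D \in V_j$ with multiplicities $k_1, k_2, k_3, k_4 \geq 0$ in any order, the resulting vector $w$ lies in $V_j$ and $w_j = k_1 A_j + k_2 B_j + k_3 C_j + k_4 D_j$.

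In the base case $j = 1$, every element of $V_1$ has the form $v_1 e_1$, so it suffices to realize every $v_1 \in \Z$ as $k_1 A_1 + k_2 B_1 + k_3 C_1 + k_4 D_1$ with $k_i \geq 0$ and $\sum k_i \geq 1$. Setting $g := \gcd(A_1, B_1)$, the sign condition $A_1 > 0 > B_1$ allows every multiple of $g$ to be written as a non-negative combination of $A_1, B_1$ (adjust any B\'ezout solution along the one-parameter family of integer solutions and drive both coordinates to $+\infty$ by pushing the parameter); the coprimality $\gcd(C_1, D_1) = 1$ means that non-negative combinations of $C_1, D_1$ surject onto $\Z/g\Z$ (this submonoid is actually a subgroup and contains the generator $1$), so every residue class is attained. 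Choose $k_3, k_4$ to fix the residue and then $k_1, k_2$ to correct the multiple of $g$; the case $v_1 = 0$ is handled directly by $(k_1,k_2,k_3,k_4) = (|B_1|, A_1, 0, 0)$, which gives $\sum k_i > 0$.

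For the inductive step at $j \geq 2$, take $v \in V_j$. If $v_j = 0$ then $v \in V_{j-1}$ and induction applies directly. Otherwise, the same B\'ezout argument applied to the level-$j$ data yields $k_i \geq 0$ such that the concatenation $w \in V_j$ satisfies $w_j = v_j$; let $L_w$ be the length of its representation. Set $v' := M^{-L_w}(v - w)$. Since $v - w$ has zero $j$-th coordinate and zero entries above $j$, we have $v - w \in V_{j-1}$, and by the $M^{-1}$-invariance of $V_{j-1}$ also $v' \in V_{j-1}$; by induction $v'$ is representable. The final concatenation then satisfies $[v'_{\mathrm{rep}} \mid w_{\mathrm{rep}}]_M = M^{L_w} v' + w = v$, closing the induction. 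The main obstacle I anticipate is isolating the B\'ezout/Frobenius argument cleanly---the combination of a sign hypothesis (to handle the subgroup $g\Z$) with a coprimality hypothesis (to handle residues modulo $g$) is what enables non-negative coefficients---after which the $M$-invariance of $V_j$ and the fixed-coordinate observation reduce the rest of the step to formal bookkeeping.
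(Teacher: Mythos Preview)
Your proof is correct and follows essentially the same approach as the paper's: both use the key observation that $M$ preserves the subspaces $V_j=\{v:v_i=0\text{ for }i>j\}$ and fixes the $j$-th coordinate there, combine the sign condition on $A_j,B_j$ with the coprimality of $C_j,D_j$ to realize any integer as a non-negative combination, and then fix coordinates one at a time by concatenation. The only organizational difference is that you induct upward on the $V_j$ and invoke $M^{-1}$-invariance to pull back $v-w$, whereas the paper fixes coordinates from $n$ down to $1$ and phrases the same step as ``append the previous representation''; the underlying recursion is identical.
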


\begin{proof}~
One direction is obvious.

For the other direction, assume that for all $j \leq n$ that such representations exist.
We note that if $[a_k \dots a_0] = (*, * , \dots, a, 0, \dots, 0)^T$ then
$M^r [a_k \dots a_0] = M^r (*,*,\dots,a,0,\dots,0)^T = (*, * , \dots, a,0,\dots,0)^T$ where the choice of $*$ may differ, but the $a$ is fixed.

Using the coprime integers $C_j,D_j$ (not necessarily with opposite signs) we can represent all $n>n_0$ as $n = C_jx_j+D_jy_j$ with $x_j,y_j\in\N.$ Thus $a = C_jx_j+D_jy_j + A_jz_j+B_jw_j$ has a solution for any $a\in\Z$ with $x_j,y_j,z_j,w_j\in\N.$ With the comment above it follows that we can represent any $(*,\dots,*,a,0,\dots,0)^T$ 
by simply appending the representations for \[(*, \dots, *, A_j,0,\dots,0)^T, \dots, (*, \dots, *, D_j,0,\dots,0)^T.\]
Here $a$ is in the $j$th position.

Now we proceed by induction. Assume we are trying to find a representation for $(v_1, \dots, v_n)^T$.
We first find a representation for $(*, \dots, *, *, v_n)^T = (*, \dots, *, v_{n-1}', v_n)^T$ by the approach above.
Here the value of $v_{n-1}'$ can be any value.
We next repeat the process, finding a representation for
    $(*, \dots, *, v_{n-1} - v_{n-1}', 0)^T$.
Appending the previous representation to this representation gives 
    a representation for $(*, \dots, *, v_{n-1}, v_n)^T$.
We repeat this process for all other terms to get the desired result.
\end{proof}

We will denote by $\Z_m$ the integers mod $m$. The following lemma gives us a simple approach for showing that a number system is not full in some cases.
\begin{lemma}
\label{lem:local}
Let $M$ be a $n \times n$ matrix.
If there exists an $m$ such that $(M, \D)$ is not a full number system in $(\Z_m)^n$ then
    $(M, \D)$ is not a full number system  in $\Z^n$.
\end{lemma}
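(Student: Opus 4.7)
The plan is to prove the contrapositive: if $(M,\D)$ represents every vector in $\Z^n$, then it represents every vector in $(\Z_m)^n$ as well. This is essentially a naturality statement for the reduction homomorphism $\pi_m \colon \Z^n \to (\Z_m)^n$, so the argument should be very short.

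First I would fix $m$ and assume $(M,\D)$ is a full number system in $\Z^n$. Given an arbitrary class $\bar v \in (\Z_m)^n$, I would pick any lift $v \in \Z^n$ with $\pi_m(v) = \bar v$. By hypothesis there exist $k \geq 0$ and digits $a_0, a_1, \ldots, a_k \in \D$ with
\[
v = \sum_{i=0}^{k} M^i a_i.
\]
Applying the ring/module homomorphism $\pi_m$ (which commutes with matrix multiplication because the entries of $M$ are integers) yields
\[
\bar v = \pi_m(v) = \sum_{i=0}^{k} \bar M^{\,i}\, \bar a_i \qquad \text{in } (\Z_m)^n,
\]
where $\bar M$ and $\bar a_i$ denote the reductions of $M$ and $a_i$ modulo $m$. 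Thus every $\bar v \in (\Z_m)^n$ admits a representation in the reduced system, so $(M,\D)$ is full mod $m$. Contrapositive of this is exactly the statement of the lemma.

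There is no real obstacle here; the only things to verify are that matrix powers commute with reduction (immediate since $\pi_m$ is a ring homomorphism and the entries of $M$ are integers), and that the digits reduce to elements of $\D \bmod m$, which is trivial. I would keep the write-up to a few lines, noting explicitly that by ``$(M,\D)$ is a full number system in $(\Z_m)^n$'' we mean the number system with base $\bar M$ and digit set $\pi_m(\D)$ represents every element of $(\Z_m)^n$.
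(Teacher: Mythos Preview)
Your argument is correct; proving the contrapositive via the reduction homomorphism $\pi_m$ is exactly the natural approach, and all the verifications you mention go through. The paper in fact states Lemma~\ref{lem:local} without proof, treating it as evident, so your write-up is simply making explicit what the authors leave to the reader.
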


Recall that we say $C \subset \R^n$ is a {\em cone} if 
\begin{itemize}
\item $C$ is a convex set 
\item For all $x \in C$ we have $t x \in C$ for all $t \geq 0$.
\end{itemize}

\begin{lemma}
\label{lem:cone}
Let $M$ be a $n \times n$ matrix and $\D$ a digit set.
If there exists a cone $C \neq \R^n$ such that $M C + d \subset C$ for all $d \in \D$ then 
    $(M, \D)$ is not a full number system.
\end{lemma}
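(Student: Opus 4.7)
The plan is to exhibit $C$ as a forward-invariant set for the representation process and then produce an integer point outside $C$. The key observation is that since $C$ is a cone, $0 \in C$ (taking $t=0$ in the scaling axiom, assuming $C \neq \emptyset$, which is forced by any content in the hypothesis). Consequently, for every $d \in \D$,
\[
d = M \cdot 0 + d \in MC + d \subseteq C,
\]
so $\D \subseteq C$.

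Next I would show by induction that every vector representable as $v = M^k a_k + M^{k-1} a_{k-1} + \cdots + a_0$ with $a_i \in \D$ lies in $C$. It is cleanest to use Horner-style evaluation: set $v_0 = a_k \in \D \subseteq C$ and $v_j = M v_{j-1} + a_{k-j}$ for $1 \le j \le k$. At each step $v_j \in MC + a_{k-j} \subseteq C$ by hypothesis, and $v_k$ equals the claimed $v$. Hence the set of representable vectors is contained in $C$.

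The remaining step is to show that $\Z^n \not\subseteq C$, which is where essentially all of the work sits. I would argue this through a short piece of convex analysis: the complement $\R^n \setminus C$ is itself invariant under multiplication by positive scalars (if $tp \in C$ for some $t>0$ and $p \notin C$, then $p = t^{-1}(tp) \in C$, a contradiction). If $C$ has empty interior, then $C$ is contained in a proper linear subspace of $\R^n$ and its complement is a nonempty open cone; if $C$ has nonempty interior, then $\operatorname{int}(C) = \operatorname{int}(\overline{C})$, so from $C \neq \R^n$ one deduces $\overline{C} \neq \R^n$, and the separating hyperplane theorem applied at the origin produces an open half-space disjoint from $\overline{C}$. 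In either case $\R^n \setminus C$ contains a nonempty open cone $U$, and scaling $U$ by a sufficiently large positive real yields a set of diameter exceeding $\sqrt{n}$, hence containing an integer vector that lies outside $C$ and therefore admits no representation.

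The main obstacle is the convex-analytic step: making sure that a proper convex cone in $\R^n$ actually omits an integer lattice point, not merely a real point. Everything else is a routine induction on the length of the digit string. I would handle the obstacle by the two-case argument above, or equivalently by remarking that the closure $\overline{C}$ of a proper convex cone is a closed convex cone strictly contained in $\R^n$, hence contained in a closed half-space through the origin whose open complement is a lattice-rich set.
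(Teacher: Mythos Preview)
The paper states this lemma without proof, so there is no argument to compare against; your proposal supplies what the authors left implicit. The two substantive steps --- that $\D\subseteq C$ because $0\in C$, and the Horner-style induction showing every $[a_k\cdots a_0]_M$ lies in $C$ --- are exactly right and constitute the natural proof.

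Two minor points on the lattice step. First, once you have the open half-space $H=\{x:\ell(x)>0\}$ disjoint from $\overline{C}$, you are done immediately: some standard basis vector $e_i$ has $\ell(e_i)\neq 0$, so $e_i$ or $-e_i$ lies in $H\subseteq\R^n\setminus C$. There is no need to talk about scaling or diameters. Second, your phrasing ``scaling $U$ by a sufficiently large positive real yields a set of diameter exceeding $\sqrt{n}$, hence containing an integer vector'' is slightly off on both counts: $U$ is already scale-invariant, so scaling does nothing, and large diameter alone does not force a lattice point (a long thin segment has large diameter). What you want is that $U$, being open and scale-invariant, contains balls of arbitrarily large radius, and any ball of radius exceeding $\sqrt{n}/2$ meets $\Z^n$. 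Finally, the edge case $C=\emptyset$ (which vacuously satisfies both the cone axioms and the invariance hypothesis) should be excluded explicitly; you flag this, but note that the hypothesis \emph{is} satisfied for $C=\emptyset$, so ``forced by any content in the hypothesis'' is not quite a proof --- one simply reads the paper's definition of cone as implicitly requiring nonemptiness.
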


It is worth noting that if $M = J_n$ and for all vectors $(v_1, v_2, \dots, v_n)^T, (w_1, w_2, \dots, w_n)^T \in \mathcal{D}$ we have $w_n v_n \geq 0$ then there will exist a cone $C \neq \R^n$ with the above property.

We will conclude this section by examples of application of the previous results.

\begin{example}
Let $M = J_2$ and $\D = \{d_1, d_2\} = \{\vtwopc, \vtwomc \}.$
\label{example:J2}

We observe that 
\begin{align*}
[d_1] & = (0, 1)^T, \\
[d_2] & = (0, -1)^T, \\
[d_1 d_2] & = (1, 0)^T, \\
[d_2 d_1] & = (-1, 0)^T. \\
\end{align*}

Hence by Lemma \ref{lem:valid} we have that this is a full number system.
\end{example}

\begin{example}\label{example:J3}
Let $M = J_3$ and $\D = \{d_1, d_2\} = \{\vthreepc, \vthreemc \}.$
Notice that $\D \equiv \{\vthreepc, \vthreemc\} \equiv \{(0,0,1)^T\} \bmod 2$.

We will iteratively find all representations $\bmod\ 2$ of length at most $m$, denoted $R_m$, by 
    considering the set of all $M v_1 + v_2 \bmod 2$ where $v_1$ is a representation of length 
    at most $m-1$ and $v_2 \in \D$.

We obtain 
\begin{align*}
R_1 & \equiv \{(0,0,1)^T\} \bmod 2,\\
R_2 & \equiv \{(0,0,1)^T, (0,1,0)^T\} \bmod 2,\\
R_3 & \equiv \{(0,0,1)^T, (0,1,0)^T, (1,1,1)^T\} \bmod 2,\\
R_4 & \equiv \{(0,0,1)^T, (0,1,0)^T, (1,1,1)^T, (0,0,0)^T\} \bmod 2,\\
R_5 & \equiv \{(0,0,1)^T, (0,1,0)^T, (1,1,1)^T, (0,0,0)^T\} \bmod 2.\\
\end{align*}
As $R_4 \equiv R_5 \bmod 2$ we have that $R_m \equiv R_4 \bmod 2$ for all $m \geq 4$.
From this we conclude from Lemma \ref{lem:local} that $(M,D)$ is not a full number system.
\end{example}

\begin{example}
Let $M = J_3$ and  $\D = \{d_1, d_2\} = \{\vthreepc, (0, 1,-2)^T \}$
\label{example:J3w}

We observe that 
\begin{align*}
[d_1] & = (0,0,1)^T, \\
[d_2] & = (0,1,-2)^T, \\
[d_1 d_2 d_1] & = (2,1,0)^T, \\
[d_2 d_1 d_1] & = (0,-2,0)^T, \\
[d_2 d_1 d_1 d_1 d_1 d_2 d_2 d_1 d_1] & = (1, 0, 0)^T, \\
[d_2 d_1 d_1 d_1 d_2 d_1 d_1 d_2 d_1] & = (-5, 0, 0)^T. 
\end{align*}
Hence by Lemma \ref{lem:valid} this is a full number system.
In a similar way we can show that $J_4$ with $\{\vfourpc, (0,0,1,-2)^T\}$ is a full number system where
    the representations for $(A_1, 0, 0, 0)^T$ and $(B_1, 0, 0, 0)^T$ with $A_1 B_1 < 0$ and $\gcd(A_1, B_1) = 1$ are of length 27.
\end{example}

\section{Digit Sets for \usebox{\matrixone}} 
\label{sec:J2}
In this section we will fully classify the size two digit set $\mathcal{D}$ such that 
    $(J_2, \mathcal{D})$ is a full number system.
Let us first describe what those $\mathcal{D}$ such that $(J_2, \mathcal{D})$ is 
    not a full number system.
    
\begin{remark}
\label{rem:gcd}
Let $\D = \{(a,b)^T,(c,d)^T\}$.
From Lemma \ref{lem:local} we see that if $\gcd(b,d) \neq 1$ then $(M, \D)$ is not a full number system.
From Lemma \ref{lem:cone} we see that if $b d \geq 0$ then $(M, \D)$ is not a full number system.
\end{remark}
    
\begin{prop}\label{prop:local_obstruction1}
Let $\D = \{(a,b)^T,(c,d)^T\}$ with $bd<-1.$ If $\gcd(ad-bc,|b|+|d|) \geq 3$, then $\D$ is not a full number system for $J_2.$
\end{prop}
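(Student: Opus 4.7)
The strategy is to apply Lemma \ref{lem:local} with modulus $m = g := \gcd(ad-bc, |b|+|d|) \geq 3$, by exhibiting a quadratic congruence modulo $g$ that every representable pair must satisfy, together with a residue class that violates it. Since $bd < -1$, the two digits have nonzero second coordinates of opposite sign; after possibly swapping the two digits I may assume $b > 0 > d$, so $|b|+|d| = b - d$. Hence $g \mid (b - d)$ and $g \mid (ad - bc)$, i.e.\ $b \equiv d \pmod g$ and $bc \equiv ad \pmod g$.

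Now consider any representation $v = \sum_{i=0}^{k} J_2^i a_i$. Let $s$ (respectively $t = (k+1) - s$) count the occurrences of the digit $(a,b)^T$ (respectively $(c,d)^T$), and let $\sigma, \tau$ denote the sums of the corresponding position indices, so $\sigma + \tau = k(k+1)/2$. Since $J_2^i (x, y)^T = (x + iy, y)^T$,
\[ v_1 = sa + tc + b\sigma + d\tau, \qquad v_2 = sb + td. \]
The key combination is $bv_1 - av_2 = t(bc - ad) + b(b\sigma + d\tau)$. Reducing modulo $g$, the first summand vanishes, while $b(b\sigma + d\tau) = b^2(\sigma+\tau) + b(d - b)\tau \equiv b^2 k(k+1)/2 \pmod g$. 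Meanwhile $v_2 \equiv (k+1) b \pmod g$, so $b^2(k+1)^2 \equiv v_2^2 \pmod g$, and hence $b^2 k(k+1) = b^2(k+1)^2 - b^2(k+1) \equiv v_2^2 - b v_2 \pmod g$. Doubling and rearranging produces the invariant
\[ 2b v_1 \equiv v_2^2 + (2a - b) v_2 \pmod g, \]
which I will call $(\star)$; crucially, it depends only on $(v_1, v_2) \pmod g$.

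To finish I would observe that $(\star)$ cannot be satisfied by every pair in $(\Z_g)^2$: fixing $v_2 = 0$ and letting $v_1$ vary forces $g \mid 2b$, while fixing $v_1 = 0$ and taking $v_2 \in \{1,2\}$ forces $g$ to divide both $1 + 2a - b$ and $4 + 2(2a - b)$, whence $g \mid 2$, contradicting $g \geq 3$. Any residue class in $(\Z_g)^2$ violating $(\star)$ is therefore unrepresentable, and Lemma \ref{lem:local} gives the conclusion.

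I expect the most delicate point to be verifying that the auxiliary counters $k, s, t, \sigma, \tau$ genuinely disappear from $(\star)$; this is precisely why one must feed the identity $b(k+1) \equiv v_2 \pmod g$ back into the quadratic term $b^2 k(k+1)$ rather than stopping at the earlier congruence $bv_1 - av_2 \equiv b^2 k(k+1)/2 \pmod g$, which still involves $k$.
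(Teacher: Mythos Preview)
Your argument is correct, and it differs from the paper's in a meaningful way. The paper works modulo $m=|b|+|d|$, restricts attention to representations of vectors $(X,0)^T$, counts how many of each digit must appear, and then shows that $X\bmod m$ is trapped in at most two cosets of the cyclic subgroup generated by $ad-bc$, which has index $\geq 3$; hence $X$ misses some residue. You instead work modulo the smaller number $g=\gcd(ad-bc,|b|+|d|)$ and derive the single quadratic invariant $2bv_1\equiv v_2^2+(2a-b)v_2\pmod g$ valid for \emph{every} representable pair, not just those with $v_2=0$, then dispatch the surjectivity question with two well-chosen substitutions. What your route buys is a self-contained closed-form obstruction on the whole of $(\Z_g)^2$ and a cleaner endgame (no case split on the parity of $m$ or on the value of the triangular-number shift); what the paper's route buys is a slightly more elementary derivation that avoids the algebraic trick of feeding $v_2\equiv b(k+1)$ back into the quadratic term. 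The two are compatible: specializing your $(\star)$ to $v_2=0$ recovers a condition equivalent to the paper's subgroup constraint after reduction from $m$ to $g$. One cosmetic remark: the $v_2=0$ step you mention (forcing $g\mid 2b$) is not actually used in your contradiction, which follows entirely from the pair $v_2\in\{1,2\}$ at $v_1=0$; you could drop it.
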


\begin{proof}
Let us put $m = |b|+|d|,$ in particular $b\equiv d\bmod m.$ If $[a_1 a_2\dots a_k] = (X,0)^T,$ for $a_i \in \D$ then we have that $k = n (|b|+|d|)$ for some $n$.
Further we have that there are $n|d|$ of the digits $a_1, \dots, a_k$ are $(a,b)^T$ and $n|b|$ of the digits are $(c,d)^T$.
Let $I_1 = \{i : a_i = (a,b)^T\}$ and $I_2 = \{i: a_i = (c,d)^T\}$.
We have that $X = n(a|d| + c|b|) + \sum_{i\in I_1} bi + \sum_{i\in I_2} di$.
This implies that $X \equiv \pm n(ad-bc) + d\sum_{i=0}^{n(|b|+|d|)-1}i\bmod m$. 
We have that $n(ad-bc)$ is an element of a subgroup of $\Z_m$ with at most $m/2$ elements, and $\sum_{i=0}^{n(|b|+|d|)-1}i$ is either $0\bmod m$ or $(m-1)/2 \bmod m$ depending on $n$ and the parity of $m$. Either way, $X$ does not cover all residual classes in $\Z_m$.
\end{proof}

\begin{prop} \label{prop:mod 4}
Let $\D = \{(a,b)^T,(c,d)^T\}$.
\begin{enumerate}
\item If $b \equiv d \equiv 0 \bmod 2$ then $(J_2, \D)$ is not a full number system. 
\item If $b \equiv d \bmod 4$ and  $b, d$ both odd and $a \equiv c \bmod 2$ then $(J_2, \D)$ is not a full number system.
\item If 
$b \not\equiv d \bmod 4$ and $b, d$ both odd and $a \not\equiv c \bmod 2$ then $(J_2, \D)$ is not a full number system.
\end{enumerate}
\end{prop}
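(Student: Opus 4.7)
The plan is to derive an explicit formula for the coordinates of any representation, then reduce modulo a small integer and invoke Lemma \ref{lem:local}. Since $M^i \begin{pmatrix} x \\ y \end{pmatrix} = \begin{pmatrix} x + i y \\ y \end{pmatrix}$, a string of length $N$ with $n_1$ copies of $(a,b)^T$ at positions $I_1 \subseteq \{0,\dots,N-1\}$ and $n_2$ copies of $(c,d)^T$ at positions $I_2$ evaluates to the vector $(S_1, S_2)^T$ with
\begin{align*}
S_1 &= n_1 a + n_2 c + b T_1 + d T_2, \\
S_2 &= n_1 b + n_2 d,
\end{align*}
where $T_j = \sum_{i \in I_j} i$ and $T_1 + T_2 = N(N-1)/2$. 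All three parts of the proposition will follow from obstructions extracted from these two formulas.

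Part (1) is immediate: if $b$ and $d$ are both even then $S_2$ is always even, so $(0,1)^T$ has no representation. For part (2), I would use $b \equiv d \pmod 4$ to rewrite $S_2 = Nb + n_2(d-b) \equiv Nb \pmod 4$, which pins down $N \bmod 4$ from $S_2 \bmod 4$ since $b$ is a unit mod~$4$. Targeting the vector $(1,0)^T$, i.e.\ $S_2 = 0$, then forces $N \equiv 0 \pmod 4$. Combining $a \equiv c \pmod 2$ with $b \equiv d \equiv 1 \pmod 2$ gives $S_1 \equiv N a + N(N-1)/2 \equiv 0 \pmod 2$, contradicting the required $S_1 = 1$.

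Part (3) requires a little more bookkeeping: both $b,d$ odd together with $b \not\equiv d \pmod 4$ forces $b \equiv -d \pmod 4$, so that $S_2 \equiv b(n_1 - n_2) \pmod 4$. Setting $S_2 = 0$ and writing $N = 2m$ yields $n_2 \equiv m \pmod 2$. Under $a \not\equiv c \pmod 2$, say $a$ even and $c$ odd, the formula for $S_1$ reduces modulo $2$ to $n_2 + T_1 + T_2 \equiv m + m(2m-1) \equiv 0 \pmod 2$, so again $(1,0)^T$ is unreachable.

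The only moderately delicate point is keeping track of which moduli are needed at which step: parts (2) and (3) both blend mod~$4$ information (to control $N$ or $n_1 - n_2$) with mod~$2$ information (to pin down the parity of $S_1$). Once the coordinate formulas above are in hand, the remainder is routine parity-chasing, and the local obstruction in $(\Z/2\Z)^2$ or $(\Z/4\Z)^2$ lifts to $\Z^2$ via Lemma \ref{lem:local}.
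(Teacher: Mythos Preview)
Your argument is correct and follows the same local-obstruction strategy as the paper (invoking Lemma~\ref{lem:local}); the paper's proof simply asserts that the relevant finite check goes through, whereas you actually carry out the computation by hand via the explicit coordinate formulas. One minor remark: your final sentence about ``lifting via Lemma~\ref{lem:local}'' is slightly misplaced, since your argument already shows directly that $(0,1)^T$ or $(1,0)^T$ has no representation in $\Z^2$, so no lifting is needed.
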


\begin{proof}
It is computable that not all residual classes mod $(\Z_2)^4$ are representable, i.e. the number system is not full by Lemma~\ref{lem:local}.

\end{proof}

\begin{corollary}\label{lem:orbit}
Let $\mathcal{D} = \{(a,b)^T, (c,d)^T\}$.  
If there exists an $n \geq 3$ such that $(a,b)^T \equiv (c,d)^T \bmod{n}$ then 
    $(J_2, \mathcal{D})$ is not a full number system.
\end{corollary}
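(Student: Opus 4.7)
The plan is to split into cases according to the sign of $bd$ and reduce each case to a previously established result in this section. The first step is to unpack the vector congruence $(a,b)^T \equiv (c,d)^T \pmod{n}$ as the pair of scalar congruences $a \equiv c$ and $b \equiv d \pmod{n}$; these will supply the divisibility facts needed in the nontrivial case.

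The case $bd \geq 0$ is immediate from Remark \ref{rem:gcd}, since the cone argument of Lemma \ref{lem:cone} rules out fullness as soon as the second coordinates of the two digits have the same sign (including zero). So the bulk of the work is in the case $bd < 0$, where the goal is to verify the two hypotheses of Proposition \ref{prop:local_obstruction1}, namely $bd < -1$ and $\gcd(ad-bc, |b|+|d|) \geq 3$.

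In this case, since $b$ and $d$ have opposite signs, $|b - d| = |b| + |d|$, so the congruence $b \equiv d \pmod{n}$ gives $n \mid |b| + |d|$. A direct substitution using both componentwise congruences yields $ad - bc \equiv ab - ba \equiv 0 \pmod{n}$, hence $n \mid ad - bc$. Combining these two divisibilities shows $\gcd(ad-bc, |b|+|d|) \geq n \geq 3$. To confirm $bd < -1$, note that $|b| + |d|$ is a positive multiple of $n \geq 3$, hence at least $3$; together with $b, d$ being nonzero and of opposite signs, this forces $|bd| \geq 2$. Proposition \ref{prop:local_obstruction1} then applies and gives the conclusion.

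There is no real obstacle here: the corollary is essentially a repackaging of Remark \ref{rem:gcd} and Proposition \ref{prop:local_obstruction1} under the cleaner hypothesis of a componentwise congruence mod $n$. The only computation of substance is the one-line modular manipulation showing $n \mid ad - bc$, and the slightly fiddly step is the small-case check that $bd = -1$ cannot occur, which follows simply because $|b|+|d| = 2$ would violate $n \mid |b|+|d|$ with $n \geq 3$.
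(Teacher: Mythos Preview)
Your proof is correct and follows essentially the same route as the paper: reduce to Proposition~\ref{prop:local_obstruction1} by showing $n \mid ad-bc$ and $n \mid |b|+|d|$. If anything, your write-up is more careful than the paper's own proof, which does not explicitly separate out the case $bd \geq 0$ (handled by Remark~\ref{rem:gcd}) nor verify the side hypothesis $bd < -1$ needed to invoke Proposition~\ref{prop:local_obstruction1}.
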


\begin{proof}
This follows from Lemma \ref{lem:local}  
    as if $a\equiv c, b \equiv d \mod{m}$ for some $m \geq 3$ we have that 
        $m \mid \gcd(ad-bc, |b|+|d|)$, and hence the $\gcd \neq 1$.
\end{proof}

\begin{prop}\label{prop:good}
Let $\D = \{(a,b)^T,(c,d)^T\}$ with $bd=-1.$ Then $(J_2, \D)$ is a full number system if and only if $a\equiv c\bmod 2.$
\end{prop}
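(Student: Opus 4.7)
The plan is to dispatch the ``only if'' direction immediately from the earlier obstruction results, and then to construct the representations required by Lemma~\ref{lem:valid} to establish the ``if'' direction.

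For the forward direction, $bd=-1$ forces $\{b,d\}=\{1,-1\}$, so both are odd and satisfy $b\not\equiv d\pmod 4$. Hence if $a\not\equiv c\pmod 2$, Proposition~\ref{prop:mod 4}(3) applies and $(J_2,\D)$ is not a full number system. This is essentially a one-line appeal.

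For the converse, assume $a\equiv c\pmod 2$. By the symmetry $\D\mapsto -\D$ (which preserves fullness), I may take $d_1=(a,1)^T$ and $d_2=(c,-1)^T$. The plan is to verify the hypotheses of Lemma~\ref{lem:valid} with $n=2$. The case $j=2$ is immediate: $[d_1]=(a,1)^T$ and $[d_2]=(c,-1)^T$ already supply bottom coordinates $\pm 1$, giving both sign conditions and a coprime pair. The real work is $j=1$, where I must exhibit representations of vectors of the form $(X,0)^T$ covering both signs and a coprime pair.

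To treat $j=1$, I will parametrise. For a string of length $n$ using $r$ copies of $d_1$ at positions $I_1\subset\{0,\dots,n-1\}$ and $s$ copies of $d_2$ at positions $I_2$, the identity $M^i(x,y)^T=(x+iy,y)^T$ shows the bottom coordinate equals $r-s$ and, when $r=s$ (forcing $n=2r$), the top coordinate equals $r(a+c)+S_1-S_2$, where $S_j=\sum_{i\in I_j}i$. Since $S_1+S_2=r(2r-1)$ is fixed, taking $I_1$ to be the top $r$ indices of $\{0,\dots,2r-1\}$ gives top coordinate $r(a+c+r)$, and taking $I_1$ to be the bottom $r$ indices gives $r(a+c-r)$. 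For any $r>|a+c|$ these have opposite signs, producing the required $A_1>0$ and $B_1<0$. For the coprime condition I will use the length-two words $[d_1d_2]=(a+c+1,0)^T$ and $[d_2d_1]=(a+c-1,0)^T$; the hypothesis $a\equiv c\pmod 2$ makes both of these entries odd, and as they differ by $2$ their gcd equals $1$.

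No genuine obstacle arises beyond routine verification of the first-coordinate formula $r(a+c)+S_1-S_2$ and checking the two extremal choices of $I_1$. Lemma~\ref{lem:valid} then concludes that $(J_2,\D)$ is a full number system.
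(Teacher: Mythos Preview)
Your proof is correct and follows essentially the same approach as the paper's: both verify Lemma~\ref{lem:valid} using $[d_1d_2]=(a+c+1,0)^T$ and $[d_2d_1]=(a+c-1,0)^T$ for the coprime pair, and the blocks $[d_1^{\,k} d_2^{\,k}]$, $[d_2^{\,k} d_1^{\,k}]$ (which are precisely your extremal choices of $I_1$) for the sign condition. You are in fact more complete than the paper here, since the paper's argument never explicitly treats the ``only if'' direction, whereas your appeal to Proposition~\ref{prop:mod 4}(3) closes that gap.
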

\begin{proof}
Let $$M=\begin{pmatrix}1&1\\0&1\end{pmatrix} \quad\text{and}\quad D=\left\{d_1,d_2\right\}=\left\{ (a,1)^T, (b,-1)^T\right\}$$ 
We observe that
\begin{align*}
[v_1] & = (a,1)^T, \\
[v_2] & = (c,-1)^T, \\
[v_1 v_2] & = (a+c+1,0)^T, \\
[v_2 v_1] & = (a+c-1,0)^T.
\end{align*}

Therefore, by Lemma \ref{lem:valid}, we want to find $a,b$ such that $\gcd(a+b+1,a+b-1)=1$. We have $\gcd(a+b+1,a+b-1) = \gcd(2,a+b-1)$. Hence, choosing $a$ and $b$ such that $a \equiv b \bmod 2$ satisfies this requirement. 
We also notice that
\begin{align*}
[\underbrace{v_1 \dots v_1}_{k} \underbrace{v_2 \dots v_2}_{k}] = (k a + k b + k^2,0)^T, \\ 
[\underbrace{v_2 \dots v_2}_{k} \underbrace{v_1 \dots v_1}_{k}] = (k a + k b - k^2,0)^T.
\end{align*}

Hence, we can choose $k$ sufficiently large to get the second part of Lemma \ref{lem:valid}. 
\end{proof}

\begin{thm}
Let $\D = \{(a,b)^T, (c,d)^T\}$ be such that none of the Propositions \ref{prop:local_obstruction1}, \ref{prop:mod 4}, and \ref{prop:good} disprove $(J_2, \D)$ being a full number system. Then $(J_2,\D)$ is a full number system.

\end{thm}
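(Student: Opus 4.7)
The plan is to verify Lemma~\ref{lem:valid} for $n = 2$. The $j = 2$ conditions are trivial: the single-digit words $[(a,b)^T]$ and $[(c,d)^T]$ already have second components of opposite sign, and $\gcd(b,d) = 1$ by Remark~\ref{rem:gcd}. By that remark I may also assume $bd < 0$ and (WLOG) $b > 0$, $d < 0$, and set $L := b + |d|$. For the $j = 1$ conditions, when $bd = -1$ (equivalently $L = 2$) the result is immediate from Proposition~\ref{prop:good}, whose non-disproval hypothesis is precisely $a \equiv c \pmod 2$. I therefore concentrate on $bd < -1$, so $L \geq 3$.

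Every representation of $(X, 0)^T$ must use $n_1 = |d|m$ copies of $(a,b)^T$ and $n_2 = bm$ copies of $(c,d)^T$ for some $m \geq 1$, since $n_1 b + n_2 d = 0$ together with $\gcd(b,d) = 1$ forces this ratio. Expanding $J_2^k \binom{x}{y} = \binom{x + ky}{y}$ and letting $S = \sum_j (Lm - p_j)$ run over the positions $p_j$ of the $(a,b)^T$-digits, a direct calculation gives
\[ X(m, S) = m(bc - ad) + L S + d \cdot \tfrac{Lm(Lm - 1)}{2}, \]
with $S$ attaining every integer in an interval of length $n_1 n_2 = |bd| m^2 \geq 2$. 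As $m \to \infty$, $X(m, S_{\max}) \sim -bd L m^2 / 2 > 0$ while $X(m, S_{\min}) \sim bd L m^2 / 2 < 0$, so the desired $A$ and $B$ arise from any sufficiently large $m$.

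The crux is producing the coprime pair. For fixed $m$, all $X(m,S)$ share a common residue $\phi(m) \bmod L$, and a short case check yields $\phi(m) \equiv m\,\phi(1) \bmod L$: for $L$ odd we get $d \cdot T(Lm) \equiv 0 \bmod L$ and $\phi(1) = bc - ad$; for $L$ even (so $b, d$ are both odd) we get $d \cdot T(Lm) \equiv (m \bmod 2)\cdot L/2 \bmod L$, so $\phi(1) = (bc - ad) + L/2$. I then check prime-by-prime that $\gcd(\phi(1), L) = 1$: odd primes $p \mid L$ cannot divide $\phi(1)$ because the non-disproval of Proposition~\ref{prop:local_obstruction1} forces $\gcd(bc - ad, L) \in \{1, 2\}$; the prime $2$ is excluded because the non-disproval of Proposition~\ref{prop:mod 4} is exactly tuned so that $bc - ad + L/2$ is odd in both sub-cases $L \equiv 2 \pmod 4$ and $L \equiv 0 \pmod 4$. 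Therefore $\{\phi(m) : m \geq 1\} = \Z_L$; I choose $m_1$ with $\phi(m_1) \equiv 1 \bmod L$ and let $C = X(m_1, S_{\min}(m_1))$ and $D = C + L$, both achievable. Since $C \equiv 1 \bmod L$, $\gcd(C, D) = \gcd(C, L) = 1$, and Lemma~\ref{lem:valid} concludes the proof.

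The main obstacle is the even-$L$ residue computation. At first glance the case split in Proposition~\ref{prop:mod 4} appears ad hoc, but its parity conditions are precisely calibrated to eliminate the $p = 2$ obstruction in $\gcd(\phi(1), L)$, while Proposition~\ref{prop:local_obstruction1} simultaneously kills all odd-prime contributions; once this alignment is observed, everything else reduces to combinatorial bookkeeping on the positions $p_j$.
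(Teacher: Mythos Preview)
Your argument is correct and follows essentially the same route as the paper: both proofs reduce, via Lemma~\ref{lem:valid}, to showing that the quantity $bc-ad+\tfrac{bd(b-d)}{2}\equiv \phi(1)\pmod{L}$ is a unit modulo $L=|b|+|d|$, and both carry out the identical case split (odd $L$; even $L$ with $b\equiv d\bmod 4$; even $L$ with $b\not\equiv d\bmod 4$) using Propositions~\ref{prop:local_obstruction1} and~\ref{prop:mod 4} to rule out each prime factor. The paper simply fixes $m=1$ and takes the two explicit words $[v_1^{|d|}v_2^{|b|}]$ and $[v_1^{|d|-1}v_2v_1v_2^{|b|-1}]$ (which differ by a single swap, so $A-B=L$), whereas you parametrise by $(m,S)$ and then pick $C,C+L$ at a special $m_1$; the detour through $\phi(m_1)\equiv 1$ is unnecessary since already $\gcd(\phi(1),L)=1$ gives $\gcd(C,C+L)=1$ at $m=1$, but it does no harm.
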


\begin{proof}
We will show that the vectors
\begin{align*}
    (A,0)^T & = [\underbrace{v_1 v_1 \dots v_1}_{|d|} \underbrace{v_2 \dots v_2}_{|b|}], \\
    (B,0)^T & = [\underbrace{v_1 v_1 \dots v_1}_{|d|-1} v_2 v_1  \underbrace{v_2 \dots v_2}_{|b|-1}]
\end{align*}
satisfy $\gcd(A, B) = 1$.

Moreover, if
\begin{align*}
    (C_k,0)^T & = [\underbrace{v_1 v_1 \dots v_1}_{k|d|} \underbrace{v_2 \dots v_2}_{k |b|}], \\
    (D_k,0)^T & = [\underbrace{v_2 v_2 \dots v_2}_{k|b|} \underbrace{v_1 \dots v_1}_{k |d|}],
\end{align*}
then there exists a $k$ sufficiently large such that $C_k D_k < 0$.
Then it follows from Lemma \ref{lem:valid} that $(J_2, \D)$ is a full number system.

Assume without loss of generality that $b > 0 > d$.
We observe that $A = -ad+bc + \frac{b d (b-d)}{2}$ and $A - B = b-d$.
Hence $\gcd(A, B) = \gcd(A, b-d) = \gcd(A,|b| + |d|).$ We will show that this value is either $1,$ or we can apply one of the propositions above.
\begin{enumerate}
    \item If $b\not\equiv d\mod2,$ then $\gcd(A,|b|+|d|) = \gcd(ad-bc,|b|+|d|)\neq 2.$ Hence we either have that $(J_2, \mathcal{D})$ is a full number system, or can apply Proposition~\ref{prop:local_obstruction1}.
    \item Let $b\equiv d\equiv1\mod2.$
    \begin{enumerate}
        \item First assume that $b\equiv d\mod4.$ By Proposition~\ref{prop:mod 4} we may restrict ourselves to the case $a\not\equiv c\mod2.$ In this case $2\nmid ad-bc$ and $2\mid \tfrac{b-d}2,$ so $2\nmid A.$ Thus \[\gcd(A,|b|+|d|)=\gcd(A,\tfrac{|b|+|d|}2)=\gcd(ad-bc,|b|+|d|)\neq2\] and the rest follows as above.
        \item Finally, let $b\not\equiv d\mod 4.$ We can assume that $a\equiv c \mod2$ by Proposition~\ref{prop:mod 4}. Now $2\mid ad-bc,$ $2\nmid bd\tfrac{b-d}2,$ thus $2\nmid\gcd(A,|b|+|d|)=:p.$ We have that $p\mid |b|+|d|$ and $p\mid A,$ and because $2\nmid p,$ also $p\mid \tfrac{|b|+|d|}2.$ Therefore $p \mid \gcd(ad-bc,|b|+|d|)$ and the rest follows as before. 
    \end{enumerate}
\end{enumerate}

Next observe that 
$C_k = \frac{(d-b)bd}{2} k^2 + (bc-ad)k$ and $D_k = \frac{(b-d)bd}{2} k^2 + (bc-ad)k$.
As $d < 0 < b$ we see that $C_k < 0$ for $k$ sufficiently large.
Similarly $D_k > 0$ for $k$ sufficiently large.
\end{proof}

\section{Zero strings}
\label{sec:CFL}
For a good introduction to the theory of computing, see \cite{Sipser12}.
In this section, we explore the representations of $(0,0)^T$ with base $M = J_2$ and digits $\D = \{(0,1)^T, (0,-1)^T\}$. The main result is that the language of all strings representing $(0,0)^T$ is more complex than  context-free languages, but can be recognized by a Turing machine with memory that is logarithmic in the size of the input.

In this section, let $v_1 = (0,1)^T, v_2=(0,-1)^T$ and $L_0=\{v\in\{v_1,v_2\}^+ : [v]_M=(0,0)^T\}$ be the set of non-empty strings of $v_1,v_2$ representing the zero vector.

\begin{thm}
The language $L_0$ is not a context free language.
\end{thm}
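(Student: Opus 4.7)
The plan is to intersect $L_0$ with a carefully chosen regular language $R$ and show that the resulting sublanguage is not context-free; since context-free languages are closed under intersection with regular languages, this suffices.

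First, I would derive an explicit criterion for membership in $L_0$. Since $M^i = \begin{pmatrix} 1 & i \\ 0 & 1 \end{pmatrix}$, for a string $w = s_1 s_2 \cdots s_n$ with $s_j \in \{v_1, v_2\}$ and corresponding sign $\sigma_j \in \{+1,-1\}$, the value $[w]_M$ equals
\[
\left(\sum_{j=1}^{n}(n-j)\sigma_j,\; \sum_{j=1}^{n}\sigma_j\right)^T.
\]
Combining the two coordinates, $w \in L_0$ if and only if $\sum_{j}\sigma_j = 0$ and $\sum_{j} j\sigma_j = 0$.

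Next, take $R = v_1^* v_2^* v_1^*$. A non-empty string $v_1^i v_2^j v_1^k$ satisfies the first condition exactly when $j = i+k$. Substituting this into $\sum_p p\sigma_p = 0$ and simplifying, using the standard partial-sum formula, reduces the second condition to $k^2 - i^2 = 0$, that is, $i = k$. Therefore
\[
L_0 \cap R = \{\, v_1^i v_2^{2i} v_1^i : i \geq 1 \,\}.
\]

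Finally, I would apply the pumping lemma for context-free languages to $L_0 \cap R$. Given a pumping length $p$, I would choose $w = v_1^p v_2^{2p} v_1^p$. Any decomposition $w = uvxyz$ with $|vxy| \leq p$ and $|vy| \geq 1$ has $vxy$ lying in the union of two adjacent maximal blocks, since the central $v_2$-block has length $2p$ and so $vxy$ cannot meet both $v_1$-blocks. A short case analysis on where $vxy$ sits shows that $uv^2xy^2z$ cannot have the form $v_1^n v_2^{2n} v_1^n$: pumping inside a single block alters only that block's length, while pumping across a boundary leaves the opposite $v_1$-block untouched, so the two flanking $v_1$-blocks cannot remain of equal length while the middle $v_2$-block has exactly twice that common length. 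This contradicts the pumping lemma and hence $L_0$ is not context-free. The main technical step is the algebraic reduction identifying $L_0 \cap R$; the pumping-lemma argument is then standard, directly analogous to the classical proof that $\{a^n b^n c^n\}$ is not context-free.
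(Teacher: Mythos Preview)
Your proof is correct and follows essentially the same route as the paper: both apply the pumping lemma for context-free languages to the word $v_1^{p}v_2^{2p}v_1^{p}$ and carry out the same block-by-block case analysis. The only difference is packaging: you first intersect $L_0$ with the regular language $v_1^{*}v_2^{*}v_1^{*}$ and explicitly identify the intersection as $\{v_1^{i}v_2^{2i}v_1^{i}:i\geq1\}$, which makes the pumping-lemma verification a shape check rather than a value check, whereas the paper pumps inside $L_0$ directly and argues that the pumped word either has an unequal count of $v_1$'s and $v_2$'s or evaluates to a nonzero vector. Your derivation of the membership criterion $\sum_j\sigma_j=\sum_j j\sigma_j=0$ and the reduction to $k^2-i^2=0$ are correct and make the argument self-contained.
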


\begin{proof}We prove this by contradiction. Suppose $L_0$ is a context free language. Then by the pumping lemma there exists some integer $p$ which is the pumping length of the language $L_0$. Consider the string $w = [v_1^p v_2^{2p} v_1^p] = (0,0)^T$. The pumping lemma tells us that $w$ can be written in the form $s = uvwxy$ where $u,v,w,x,y$ are all substrings such that:
\begin{itemize}
    \item $|vx| \geq 1,$
    \item $|vwx| \leq p,$
    \item $uv^iwx^iy \in L_0$ for every integer $i \geq 0.$
\end{itemize}

Using our choice of $w$ and these facts we see that $vwx$ can only be one of five options:
\begin{enumerate}
    \item $vwx = v_1^j$ for $j \leq p$ \label{c1},
    \item $vwx = v_1^j v_2^k$ for $j+k \leq p$ \label{c2},
    \item $vwx = v_2^j$ for $j \leq p$ \label{c3},
    \item $vwx = v_2^j v_1^k$ for $j+k \leq p$ \label{c4},
    \item $vwx = v_1^j$ for $j \leq p$ \label{c5}.
\end{enumerate}

Consider the word $u v^0 w x^0 y$.  
In cases \eqref{c1}, \eqref{c3} and \eqref{c5} we see that the resulting
    word will have different number of $v_1$ and $v_2$.  
Hence it is not in $L_0$ (and in fact is not in $L$).

In case \eqref{c2} for the word $u v^0 w x^0 y$.
We see that the word is of the form 
    $v_1^{p_1} v_2^{p_2} v_2^p v_1^p$ where $p_1 + p_2 < 2 p$.
If $p_1 \neq p_2$ then this word is not in $L_0$ as it is not in $L$.
If $p_1 = p_2$ then this word evaluates to $(p_1 - p, 0)^T$ where $p_1 < p$.
As such this word is in $L$, but is not in $L_0$.

Case (4) is analogous to (2).
\end{proof}

\begin{thm}
The language $L_0$ belongs to the LSPACE class.
\end{thm}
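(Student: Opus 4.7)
The plan is to reduce membership in $L_0$ to two scalar arithmetic identities on the input, and then implement those identities with $O(\log n)$ bits of workspace via binary counters.

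First, I would compute a closed form for $[w]_M$. Writing $w = a_k a_{k-1}\dots a_0$ with $a_i = (0,\epsilon_i)^T$ and $\epsilon_i \in \{+1,-1\}$, and using
\[ M^i = \begin{pmatrix} 1 & i \\ 0 & 1 \end{pmatrix}, \]
the value of $w$ unfolds as
\[ [w]_M = \sum_{i=0}^k M^i a_i = \Bigl(\sum_{i=0}^k i\,\epsilon_i,\ \sum_{i=0}^k \epsilon_i\Bigr)^T. \]
Hence $w \in L_0$ if and only if both $\sum_i \epsilon_i = 0$ and $\sum_i i\,\epsilon_i = 0$.

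With this reformulation in hand, the LSPACE algorithm is direct. On input $w$ of length $n$, the Turing machine first passes over the input to compute $n$ in binary on its work tape, then scans the input a second time, left to right, maintaining three binary counters: a position index $j$ initialised to $n-1$ and decremented at each step, a running sum $S_1$ of the $\epsilon_j$ values seen so far, and a running weighted sum $S_2 = \sum j\,\epsilon_j$. At the end, the machine accepts iff $S_1 = 0 = S_2$. Since $|j|,|S_1| \le n$ and $|S_2| \le n^2/2$, each counter occupies $O(\log n)$ bits, and the total workspace is $O(\log n)$.

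The argument has no real obstacle; the content is the observation that the two-dimensional equation $[w]_M = 0$ decouples into two independent linear conditions on the sign pattern, thanks to the very simple form of the powers of $J_2$. Combined with the preceding theorem ruling out context-free recognition, this pins $L_0$ between the two classes and shows that the right resource for certifying zero strings is arithmetic counting rather than stack-like memory.
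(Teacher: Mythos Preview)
Your proof is correct and follows essentially the same approach as the paper. The paper evaluates $[d_1\dots d_k]$ by the Horner-type recursion $x_{i+1}=J_2x_i+d_{i+1}$ and bounds the intermediate components by $O(k^2)$; you first expand this recursion into the closed form $[w]_M=\bigl(\sum_i i\epsilon_i,\ \sum_i \epsilon_i\bigr)^T$ and then maintain those two scalar sums, but the quantities tracked and the $O(\log n)$ space bound are the same in both arguments.
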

\begin{proof}
We will show that the elementary algorithm evaluating $[d_1d_{2}\dots d_{n}]$ uses only $\mathcal{O}(\log k)$ space.
The algorithm computes $x_{i+1} = J_2x_{i} + d_{i+1}$ for $i=1,\dots,n-1$ with $x_1 = d_1$. Then $x_k = [d_1d_2\dots d_k].$ The computation of each component of $x_{i+1}$ consists of 3 additions. Working in the binary system, the lengths of inputs into these additions are $\mathcal O(\log k)$ (the worst case scenario being $d_i=v_1$ for all $i$ in which case $x_i = \tfrac{i(i-1)}{2}$).
\end{proof}

\section{Jordan blocks of larger dimensions}
\label{sec:Jd}
We will show that there exists a size three digit set $\mathcal{D}$ such that $(J_n, \mathcal{D})$ is a full number system.
In particular, denote $p = (0,\dots,0,1)^T$, $m = (0,\dots,0,-1)^T$, and $z = (0,\dots,0,0)^T$.
Then the number system $(J_n, \{p,m,z\})$ is a full number system.

Once again we will make use of Lemma \ref{lem:valid}. Define the strings $W_k$ and $Z_k$ by 
\begin{align*}
    W_1 &= pm,\qquad W_k = W_{k-1}E(W_{k-1})\\
    Z_1 &=pm,\qquad Z_k = Z_{k-1} \underbrace{zz\dots z}_{k-1\ \text{times}} E(Z_{k-1})
\end{align*}
where $E$ denotes the morphism swapping $m$ and $p$ while leaving $z$ intact. 
That is $E(w_1 w_2) = E(w_1) E(w_2)$ for any two words $w_1, w_2$, and 
    further $E(p) = m, E(m) = p$ and $E(z) = z$.
    
The first few strings are:
\[
\begin{array}{lll}
W_1 = pm\qquad & W_2 = pmmp\qquad & W_3 = pmmpmppm\\
Z_1 = pm & Z_2 = pmzmp & Z_3 = pmzmpzzmpzpm
\end{array}
\]

One may recognize that $W_k$'s are prefixes of the Thue-Morse sequence. This is no coincidence, for this sequence has been constructed as a solution to the Prouhet–Tarry–Escott problem, see for instance~\cite{AllSha}. It is a problem about partition of the set $\{1,2,\dots,N\}$ into two disjoint sets $S_1,S_2$ such that 
$\sum_{i\in S_1} i^k = \sum_{i\in S_2}i^k$ for all $i=1,2,\dots,k.$ One solution is to define $S_1$ as the indices of letters $p$ and $S_2$ as the indices corresponding to letters $m$. This fact may be found in the proof of the lemma that follows if one expanded the binomial coefficients. 


\begin{lemma}
With $W_k$ described above, we have that $[W_k]_{J_n} = (*,*, \dots, *, w_k, 0, \dots, 0)^T$ where $w_k = 2^{k(k+1)/2}$ occurs in the $(n-k)$th position.
\end{lemma}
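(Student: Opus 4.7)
My plan is to prove this by induction on $k$, exploiting the self-similar definition $W_k = W_{k-1} E(W_{k-1})$. The crucial observation is that because $W_k$ contains only the digits $p = (0,\dots,0,1)^T$ and $m = -p$, the morphism $E$ simply negates every digit; so if $u := [W_{k-1}]_{J_n}$ then $[E(W_{k-1})]_{J_n} = -u$. Letting $\ell := 2^{k-1}$ denote the length of $W_{k-1}$, the leftmost-is-highest-power convention for representations collapses the concatenation into the clean identity
\[
[W_k]_{J_n} \;=\; J_n^{\ell}\, u \;+\; (-u) \;=\; (J_n^{\ell} - I)\, u.
\]

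First I would dispatch the base case $k=1$ by direct computation: $[pm]_{J_n} = J_n e_n - e_n = e_{n-1}$, so the value at position $n-1$ is $1$ and position $n$ is $0$, matching the claim.

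For the inductive step I decompose $J_n^{\ell} - I$ by writing $J_n = I + N$, where $N$ is the nilpotent superdiagonal shift with $(Nv)_i = v_{i+1}$ for $i < n$ and $(Nv)_n = 0$. Then
\[
J_n^{\ell} - I \;=\; \sum_{j=1}^{\ell} \binom{\ell}{j}\, N^{j},
\]
and $N^{j}$ acts as an upward shift by $j$ coordinates, discarding anything that falls off the top. By the induction hypothesis $u$ vanishes at positions $n-k+2,\dots,n$ and equals $w_{k-1}$ at position $n-k+1$. Reading off the coordinates of $(J_n^{\ell} - I)\,u$: for each $i \in \{n-k+1,\dots,n\}$, every summand $\binom{\ell}{j}(N^{j} u)_i = \binom{\ell}{j}\, u_{i+j}$ with $j \geq 1$ accesses $u$ at an index strictly above $n-k+1$ and thus vanishes, so positions $n-k+1,\dots,n$ of $[W_k]_{J_n}$ are zero. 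At position $n-k$, only the $j=1$ summand is nonzero, contributing $\binom{\ell}{1}\, u_{n-k+1} = \ell \cdot w_{k-1} = 2^{k-1}\, w_{k-1}$.

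This yields the recursion $w_k = 2^{k-1}\, w_{k-1}$. Combined with $w_1 = 1$, iteration gives $w_k = 2^{1+2+\cdots+(k-1)} = 2^{k(k-1)/2}$; my computation therefore suggests the exponent in the statement should read $k(k-1)/2$. There is no serious obstacle in the proof: the whole substance is the algebraic identity $[W_k]_{J_n} = (J_n^{\ell} - I)\,u$ granted by the $E$-negation property, after which the nilpotency of $N$ trivialises the coordinate bookkeeping.
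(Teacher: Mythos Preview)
Your proof is correct, and so is your diagnosis of the exponent: the paper's own computation produces $\prod_{\ell=0}^{k-1}2^{\ell}=2^{k(k-1)/2}$, so the $k(k+1)/2$ in the statement is a typo there as well.

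Your route differs genuinely from the paper's. You argue by induction on $k$, using the recursion $W_k=W_{k-1}E(W_{k-1})$ together with $[E(\cdot)]=-[\cdot]$ to get $[W_k]=(J_n^{2^{k-1}}-I)\,[W_{k-1}]$, and then the nilpotent decomposition $J_n=I+N$ makes the coordinate bookkeeping immediate. The paper instead works non-inductively: it invokes the Thue--Morse generating-function identity $\sum_{\ell}t_\ell x^\ell=\prod_{\ell=0}^{k-1}(1-x^{2^\ell})$, substitutes the matrix $M=J_n$ for $x$, and then observes that the $(n-i)$th coordinate of the resulting vector is the coefficient of $y^i$ in $(-1)^k\prod_{\ell=0}^{k-1}\bigl(1-(1+y)^{2^\ell}\bigr)$, whose lowest-degree term is $2^{k(k-1)/2}y^k$. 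Your argument is more self-contained and avoids the generating-function machinery; the paper's approach, on the other hand, handles all coordinates simultaneously, makes the link to the Prouhet--Tarry--Escott problem explicit, and generalises verbatim to the companion sequence $Z_k$ by replacing the product with $\prod_{\ell}(1-x^{2^{\ell+1}-1})$.
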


\begin{proof}
Let $M=J_n$, we see that 
$$M^k \begin{pmatrix} 0 \\ 0 \\ \vdots \\0 \\ 1\end{pmatrix} 
    = \begin{pmatrix} 1 & k & k\choose 2 & \dots & \dots & \dots & k\choose n-1 \\ 0 & 1 & k & k\choose 2 & \dots & \dots & k\choose n-2 \\ 0 & 0 & 1 & k & k\choose 2 & \dots & k\choose n-3 \\ \vdots & & & & &\ddots &\vdots \\ 0 & 0 & 0 & 0 & 0 & 0 & 1 \end{pmatrix} \begin{pmatrix} 0 \\ 0 \\ 0 \\ \vdots \\ 1 \end{pmatrix} = \begin{pmatrix} {k\choose n-1} \\  {k\choose n-2} \\  {k\choose n-3} \\ \vdots \\ 1 \end{pmatrix}.$$
We observe that the term in the $(n-i)$th position is equal to the coefficient 
    of $y^i$ in $(1+y)^k$.

Consider the first $2^{k}$ terms of the Thue-Morse sequence, given by 
    $t_0, t_1, \dots, t_{2^{k+1}-1} = +1, -1, -1, +1, -1, \dots$.
From \cite{AllSha} we see that $\sum_{\ell=0}^{2^{k}-1} t_\ell x^\ell = \prod_{\ell=0}^{k-1} (1-x^{2^\ell})$.
Let $\mathrm{Rev}(w)$ be the reverse of a word.  That is
    $\mathrm{Rev}(a_1 a_2 \dots a_d) = a_d a_{d-1} \dots a_1$.
This gives us that
\begin{align*} [\mathrm{Rev}(W_k)]_{J_n} &= (-1)^{k}
(t_0 M^0 + t_1 M^1 + \dots + t_{2^{k}-1} M^{2^{k}-1}) p \\ 
 & = \left(\prod_{\ell=0}^{k-1} (I-M^{2^{\ell}})\right) p,
\end{align*}
where $I$ is the identity matrix.
Here the factor of $(-1)^k$ results from the fact that the $\mathrm{Rev}(W_k) = W_k$  when $k$ is even and $E(W_k)$ when $k$ is odd.

We further observer that the term in the $(n-i)$th position of this resulting vector is 
     given by the coefficient of $y^i$ in the product $(-1)^k\prod_{\ell=0}^{k-1} (1-(1+y)^{2^\ell})$.
This product is 
\begin{align*}
(-1)^k \prod_{\ell=0}^{k-1}(1-(1+y)^{2^\ell}) & = (-1)^k\prod_{\ell=0}^{k-1} -\left({2^\ell\choose 1} y + {2^\ell \choose 2} y^2 + \dots \right) \\
    & = \left(\prod_{\ell=0}^{k-1} 2^\ell\right)y^k + \text{higher order terms}\\
    & = 2^{k (k-1)/2} y^k + \text{higher order terms}.
\end{align*}
This proves that the value of the $(n-k)$th term in $W_k$ is $2^{k(k+1)/2}$ as desired.  
Further we have that the value of the $(n-i)$th term in $W_k$ is $0$ for all $i < k$.
\end{proof}

\begin{lemma}
With $Z_k$ described above, we have that $[Z_k]_{J_n} = (*,*, \dots, *, z_k, 0, \dots, 0)^T$ where $z_k = \prod_{\ell=0}^{k-1} (2^{\ell+1}-1)$ occurs in the $(n-\ell)$th position.
\end{lemma}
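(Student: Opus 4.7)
I would begin by turning the recursive definition of $Z_k$ into a recursion for $v_k := [Z_k]_{J_n}$. Using the basic concatenation rule $[uw]_M = M^{|w|}[u]_M + [w]_M$ together with $[z^{k-1}]_M = 0$ and $[E(w)]_M = -[w]_M$ (since $E$ negates every nonzero digit), the identity $Z_k = Z_{k-1}\,z^{k-1}\,E(Z_{k-1})$ yields, for $k \ge 2$,
\[
v_k \;=\; \bigl(M^{L_{k-1}+k-1}-I\bigr)\,v_{k-1},
\]
where $L_k := |Z_k|$ satisfies $L_1 = 2$ and $L_k = 2L_{k-1} + (k-1)$. A short induction shows $L_{k-1} + k - 1 = 2^k - 1$, so unrolling from the base case $v_1 = (M-I)\,p$ gives the clean product formula
\[
v_k \;=\; \prod_{j=1}^{k}\bigl(M^{2^j-1}-I\bigr)\,p.
\]

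The heart of the proof is then a simple \emph{shift-down} lemma: for any $a \in \N$ and $0 \le j \le n-2$, if $w \in \Z^n$ has its last nonzero entry equal to $\alpha$ in position $n-j$ (so $w_{n-j+1} = \cdots = w_n = 0$), then $(M^a - I)\,w$ has its last nonzero entry equal to $a\alpha$ in position $n - j - 1$. To prove this I would write $M = I + N$, where $N$ is the nilpotent superdiagonal shift with $N e_i = e_{i-1}$ (and $N e_1 = 0$), so that $M^a - I = \sum_{i \ge 1}\binom{a}{i} N^i$. The column $(M^a - I)\,e_{n-j}$ then equals $a\,e_{n-j-1}$ plus a combination of $e_{n-j-2},\dots,e_1$, whereas for every $i < n - j$ the column $(M^a - I)\,e_i$ is supported entirely in positions $\le i - 1 < n - j - 1$. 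Hence only the leading entry of $w$ contributes at position $n - j - 1$, giving the claim.

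Applying the shift-down lemma $k$ times to the vector $p = e_n$, taking $a = 2^j - 1$ at the $j$th step, produces a vector whose last nonzero entry lies in position $n - k$ and equals
\[
\prod_{j=1}^{k}(2^j - 1) \;=\; \prod_{\ell=0}^{k-1}(2^{\ell+1} - 1) \;=\; z_k,
\]
with all higher positions zero, which is exactly the statement. The only mildly delicate point is verifying the length identity $L_{k-1} + k - 1 = 2^k - 1$ and checking that the shift-down structure is preserved when the operator $M^a - I$ is expanded in the $N$-basis; everything else is a routine induction once the recursion and shift-down lemma are in place.
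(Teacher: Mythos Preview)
Your argument is correct. Both your approach and the paper's arrive at the same key identity $[Z_k]_{J_n} = \prod_{j=1}^{k}\bigl(M^{2^j-1}-I\bigr)\,p$, and both extract the $(n-k)$th coordinate by writing $M = I+N$ and expanding binomially. The difference is only in how that product is obtained: the paper, mirroring its proof for $W_k$, observes directly that the digit sequence of $Z_k$ has generating function $\prod_{\ell=0}^{k-1}(1-x^{2^{\ell+1}-1})$ and evaluates it at $x=M$; you instead derive the same product recursively from the concatenation rule and the length identity $L_{k-1}+k-1 = 2^k-1$. Your route is slightly more self-contained (it does not rely on recognising the generating-function factorisation up front), while the paper's one-line version leans on the parallel with the Thue--Morse computation already carried out for $W_k$. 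Substantively they are the same proof.
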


\begin{proof}
This is done in a similar way, using the observation that the generating function 
    for the sequence $+1, -1, 0, -1, +1, 0, 0, \dots$ is given by 
    $\prod_{\ell=0}^{k-1} (1-x^{2^{\ell+1}-1})$.
\end{proof}

To conclude, the following theorem follows from Lemma \ref{lem:valid} when realizing that $w_n$ and $z_n$ are coprime and $[E(a_k\dots a_0)] = -[a_k\dots a_0]$ for any $a_k\dots a_0\in\{p,m,z\}^{k+1}.$
\begin{thm}
The number system $(J_n,\{p,m,z\})$ is a full number system.
\end{thm}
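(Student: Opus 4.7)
The plan is to apply Lemma~\ref{lem:valid} directly, using the strings $W_k$ and $Z_k$ (together with their $E$-images) to exhibit, for every coordinate $j$, the four required representations whose $j$th entry is specified and whose entries below position $j$ vanish.

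First, I would establish the negation property: the map $E$ swaps $p \leftrightarrow m$ and fixes $z$, so $E(d) = -d$ for every $d \in \{p,m,z\}$. By linearity of the evaluation map $a_k \dots a_0 \mapsto \sum_i J_n^i a_i$, it follows that $[E(a_k\dots a_0)]_{J_n} = -[a_k\dots a_0]_{J_n}$ for any string over $\{p,m,z\}$.

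Next, fix $j \in \{1, \dots, n-1\}$ and set $k = n-j \geq 1$. By the preceding two lemmas, $W_k$ and $Z_k$ evaluate to vectors of the shape $(*,\dots,*,w_k,0,\dots,0)^T$ and $(*,\dots,*,z_k,0,\dots,0)^T$ with $w_k$ and $z_k$ occupying position $j$. Applying $E$ to each string produces representations of the same shape but with $-w_k$ and $-z_k$ in position $j$. I would then take
\[ A_j = w_k = 2^{k(k+1)/2}, \quad B_j = -w_k, \quad C_j = w_k, \quad D_j = z_k = \prod_{\ell=0}^{k-1}(2^{\ell+1}-1). \]
Clearly $A_j > 0 > B_j$. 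For the coprimality requirement, observe that $w_k$ is a power of $2$, while $z_k$ is a product of odd Mersenne-type integers $2^{\ell+1}-1$; hence $\gcd(C_j,D_j) = \gcd(w_k,z_k) = 1$. For the remaining case $j = n$, the single-digit representations $[p] = (0,\dots,0,1)^T$ and $[m] = (0,\dots,0,-1)^T$ already furnish vectors with last coordinate $1$ and $-1$, so one may take $A_n = 1$, $B_n = -1$, $C_n = 1$, $D_n = -1$, trivially satisfying both conditions.

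Having produced the $4n$ representations demanded by Lemma~\ref{lem:valid}, the conclusion that $(J_n,\{p,m,z\})$ is a full number system is immediate. There is no serious obstacle here: the whole content of the argument sits in the two lemmas about $W_k$ and $Z_k$ that have already been proved. The only thing to watch is the parity/primality structure ensuring $\gcd(w_k,z_k)=1$, and this is clean because $2^{\ell+1}-1$ is odd for every $\ell \geq 0$.
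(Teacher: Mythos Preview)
Your proposal is correct and follows exactly the paper's approach: apply Lemma~\ref{lem:valid} using the two preceding lemmas on $W_k$ and $Z_k$, the observation that $E$ negates so that $[E(\cdot)] = -[\cdot]$, and the fact that $w_k$ is a power of $2$ while $z_k$ is odd. Your write-up is in fact more explicit than the paper's one-line justification, spelling out the case $j=n$ separately and naming the four values $A_j,B_j,C_j,D_j$ for each coordinate.
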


\section{Matrices similar to $J_n$}
\label{sec:similar}

Let $M$ be similar to a Jordan block of dimension $n$.
In this section we will investigate the minimal size of $\mathcal{D}$ such that 
    $(M, \mathcal{D})$ is a full number system.
For this purpose we are going to need a notion of numeration systems for lattices.
\begin{definition}
Let $M,B\in\Z^{n\times n}$ and $\D\subset B\Z^n$. We say that $(M,\D)$ is a full number system for the lattice $B\Z^n$, if every $z\in B\Z^n$ can be expressed as $z = \sum_{i=0}^k d_i M^i$ for some choice
    of $d_i \in \mathcal{D}$ and some $k$.
\end{definition}

First, we provide a lattice analogy of Lemma~\ref{lem:valid}.

\begin{lemma}
\label{lem:valid lattice}
Let $J_n$ be the $n\times n$ Jordan block and let $\mathcal{L}=B\Z^n$ where $B = (b_{i,j})$ is an upper triangular matrix.

Then  $(M, \D)$ is a full number system for $\mathcal{L}$ if and only if for all $1\leq j\leq n$ there exist representations of
$4n$ vectors  $(*, \dots ,*, A_j, 0 ,\dots, 0)^T$, $(*, \dots ,*, B_j, 0 ,\dots, 0)^T$, $( *, \dots ,*, C_j, 0 ,\dots, 0)^T$ and $ (*, \dots ,*, D_j, 0 ,\dots, 0)^T$ such that

\begin{enumerate}
    \item $A_j > 0 > B_j$
    \item $\gcd(C_j, D_j) = b_{j,j}$
\end{enumerate}
\end{lemma}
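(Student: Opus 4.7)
The plan is to follow the proof of Lemma~\ref{lem:valid} almost verbatim, modifying only the arithmetic at position $j$ to respect the lattice structure. The forward direction is immediate: if every vector of $\mathcal{L}$ is representable, then the $4n$ required vectors, which can easily be exhibited in $\mathcal{L}$, certainly are.

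For sufficiency, two structural observations drive the argument. First, since $B$ is upper triangular, any $w\in B\Z^n$ whose coordinates in positions greater than $j$ vanish must have $w_j\in b_{j,j}\Z$: writing $w=Bx$, the vanishing forces $x_n=\cdots=x_{j+1}=0$, so $w_j=b_{j,j}x_j$. Second, exactly as in the proof of Lemma~\ref{lem:valid}, $J_n$ preserves the $j$th coordinate of any vector whose entries below position $j$ are zero; hence if $[u_i]_{J_n}$ is of the form $(*,\dots,*,a_i,0,\dots,0)^T$ with $a_i$ in slot $j$, the concatenation $[u_1 u_2\cdots u_r]_{J_n}$ has $j$th coordinate equal to $\sum_i a_i$, and in particular vanishes below position $j$.

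The main step is to show that for every $a\in b_{j,j}\Z$ one can realise some vector of the form $(*,\dots,*,a,0,\dots,0)^T$. Because $\gcd(C_j,D_j)=b_{j,j}$, the integers $C_j/b_{j,j}$ and $D_j/b_{j,j}$ are coprime, so every sufficiently large multiple of $b_{j,j}$ has the form $C_jx+D_jy$ with $x,y\in\N$ by the Frobenius coin argument. Using the opposite-sign pair $A_j>0>B_j$ to shift by arbitrarily large multiples of $b_{j,j}$ in either direction, one writes any $a\in b_{j,j}\Z$ as $A_jz+B_jw+C_jx+D_jy$ with $z,w,x,y\in\N$, and concatenates the four given representations in those multiplicities to produce the desired vector.

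The proof then concludes with a downward induction on $j$, exactly as in Lemma~\ref{lem:valid}. Given $v\in\mathcal{L}$, suppose one has built a representation of some $w^{(j+1)}\in\mathcal{L}$ with $w^{(j+1)}_k=v_k$ for all $k>j$. Then $v-w^{(j+1)}\in\mathcal{L}$ has zeros in positions $>j$, so by the first observation $(v-w^{(j+1)})_j\in b_{j,j}\Z$; the previous step supplies a representation of $(*,\dots,*,(v-w^{(j+1)})_j,0,\dots,0)^T$, which one prepends to the representation of $w^{(j+1)}$ to obtain $w^{(j)}$ agreeing with $v$ in coordinates $\geq j$. After $n$ stages one has $w^{(1)}=v$. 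The chief point to verify, and the main obstacle, is that every $w^{(j)}$ does lie in $\mathcal{L}$---this is needed for the first observation to apply to the residual. It follows from the implicit assumption $J_n\mathcal{L}\subseteq\mathcal{L}$ (equivalently, $B^{-1}J_nB\in\Z^{n\times n}$), which is automatic in the intended application of Section~\ref{sec:similar}, where $\mathcal{L}=P^{-1}\Z^n$ for $M=PJ_nP^{-1}\in\Z^{n\times n}$.
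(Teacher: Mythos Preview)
Your proof is correct and follows the paper's approach exactly: the paper's own argument is a single sentence recording the key lattice observation (that the $j$th entry of a vector in $B\Z^n$ with zeros below position $j$ lies in $b_{j,j}\Z$) and then deferring to the proof of Lemma~\ref{lem:valid}. You are in fact more careful than the paper in isolating the implicit hypothesis $J_n\mathcal{L}\subseteq\mathcal{L}$, which the paper only invokes later in the proof of Theorem~\ref{thm:similar}.
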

If $(*,\dots,a,0,\dots,0)^T\in B\Z^n$ with $B$ upper triangular, then $a = k b_{j,j}$ for some $k\in\Z$, where $j$ is the position of $a$. The rest of the proof is similar to the proof of Lemma \ref{lem:valid}. 

The following lemma is a corollary of the Hermite normal form for integer matrices.
\begin{lemma}
\label{lem:hermite}
Let $P \in \mathbb{Z}^{n \times n}$.
There exists an upper triangular integer matrix $B$ and unimodular matrix $U$ such that $P = B U$.
\end{lemma}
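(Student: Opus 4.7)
The plan is to invoke the classical column-style Hermite normal form for integer matrices. Recall the standard statement: for every $P \in \Z^{n\times n}$, there exists a unimodular matrix $V \in \Z^{n\times n}$ such that $PV$ is upper triangular. The proof of this is a routine application of the extended Euclidean algorithm carried out with elementary column operations, namely column swaps, column negations, and adding an integer multiple of one column to another. Each of these three types of operations corresponds to right-multiplication by a unimodular integer matrix, so the product $V$ of all such operations is itself unimodular.

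More concretely, I would triangularize $P$ bottom-up. Applied to the last row, the extended Euclidean algorithm on the entries, realized by column operations, produces a single nonzero entry (the gcd of the row) in the bottom-right corner with zeros to its right; then I would restrict attention to the leading $(n-1)\times(n-1)$ principal block and repeat the procedure on its last row, and so on. After $n$ stages the result is an upper triangular matrix $H \in \Z^{n\times n}$ with $PV = H$, where $V$ is the accumulated unimodular factor.

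Given this, I would set $B := H$ (which is upper triangular by construction) and $U := V^{-1}$. Since $V$ is unimodular, its determinant is $\pm 1$, so $U = V^{-1}$ is also an integer matrix with $\det U = \pm 1$ and is therefore unimodular. Rearranging $PV = B$ immediately yields $P = B U$, which is the desired factorization.

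The lemma is essentially just a translation between the two equivalent formulations $PV = H$ and $P = BU$ of the column Hermite normal form, so there is no genuine obstacle: the only point one should verify carefully is that the direction of multiplication matches the statement, which is handled by passing from $V$ to its inverse $U$.
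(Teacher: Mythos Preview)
Your proof is correct. Both you and the paper reduce to the Hermite normal form, but you apply it differently. The paper works with the inverse: it sets $c=\det P$, applies the \emph{row} version of HNF to $cP^{-1}$ to obtain $U'(cP^{-1})=H$ with $H$ upper triangular, and then inverts to get $P=(cH^{-1})U'$, checking that $B:=cH^{-1}=PU'^{-1}$ is integral. Your argument instead applies the \emph{column} version of HNF directly to $P$, producing $PV=B$ upper triangular and taking $U=V^{-1}$. Your route is the more direct of the two and, as a small bonus, does not require $P$ to be nonsingular (the paper's proof tacitly uses $P^{-1}$, which is harmless in context since $P$ arises there as a similarity matrix). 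Conceptually the two arguments are equivalent reformulations of the same HNF fact.
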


\begin{proof}
Let $c = \det(P)$.
By putting $c P^{-1}$ into Hermite normal form, 
    there exists an upper triangular 
    matrix $H$ and a unimodular matrix $U'$ such that $c U' P^{-1} = H$.
Taking inverses of both sides we have $P U'^{-1} = c H^{-1}$.  
This gives us that $P = c H^{-1} U'$.
Taking $B = c H^{-1}$ we have the desired result. Note that $B$ is integral, because $B = PU'^{-1}$ with $P$ integral and $U'$ unitary.
\end{proof}
Finally, the following theorem gives the size of the smallest digit set $\mathcal{D}$ such that $(M, \mathcal{D})$ is a full number system for bases of dimension $d$ similar to the Jordan block $J_n$.
\begin{thm}
\label{thm:similar}
Let $n \geq 2$ and $M\in\Z^{n\times n}$ be similar to $J_n.$ Then there exists a digit set $\mathcal{D}$ of size $n$ such that $(M, \mathcal{D})$ is a full number system. Moreover, this bound is tight, i.e. for each $n$ there exists $M$ similar to $J_n$ for which any digit set with less then $n$ elements does not give a full number system.
\end{thm}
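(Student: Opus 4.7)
The plan is to prove the two halves of the theorem by essentially independent arguments.

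For the upper bound (that $n$ digits always suffice), I would reduce to a lattice problem. Starting with $M$ similar to $J_n$, I would clear denominators to obtain $P \in \mathbb{Z}^{n \times n}$ with $M = P J_n P^{-1}$, and then apply Lemma~\ref{lem:hermite} to factor $P = BU$ with $B$ upper triangular integer and $U$ unimodular. This rewrites $M = B \tilde{J} B^{-1}$, where $\tilde{J} = U J_n U^{-1}$ is an integer matrix similar to $J_n$. Multiplying the identity $v = \sum_i M^i d_i$ through by $\det(B)$ and applying $B^{-1}$ produces the equivalent relation
\[
\operatorname{adj}(B)\, v \;=\; \sum_i \tilde{J}^i\, \operatorname{adj}(B)\, d_i,
\]
where $\operatorname{adj}(B) = \det(B)\cdot B^{-1}$ is upper triangular integer. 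Thus $(M, \mathcal{D})$ is a full number system for $\mathbb{Z}^n$ if and only if $(\tilde{J}, \operatorname{adj}(B)\mathcal{D})$ is a full number system for the lattice $\mathcal{L} = \operatorname{adj}(B)\mathbb{Z}^n$, to which Lemma~\ref{lem:valid lattice} applies directly with $B' = \operatorname{adj}(B)$.

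I would then build the required $n$ digits inside $\mathcal{L}$. The natural candidates are the columns of $\operatorname{adj}(B)$ (which automatically lie in $\mathcal{L}$ and have the triangular shape that forces zeros in the positions below the diagonal), suitably combined with a sign change to provide positivity and negativity in the top component. With these $n$ digits in hand, I would generalize the Thue--Morse-like words $W_k, Z_k$ from Section~\ref{sec:Jd}: the role played there by the single digit pair $\{p, m\}$ would now be played by signed combinations of the $n$ columns, producing representations of vectors $(*, \ldots, *, A_j, 0, \ldots, 0)^T$, $(*, \ldots, *, B_j, 0, \ldots, 0)^T$, $(*, \ldots, *, C_j, 0, \ldots, 0)^T$, $(*, \ldots, *, D_j, 0, \ldots, 0)^T$ satisfying $A_j > 0 > B_j$ and $\gcd(C_j, D_j) = \operatorname{adj}(B)_{jj}$, as demanded by Lemma~\ref{lem:valid lattice}.

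For the tightness, I would exhibit $M := I + pN$, where $p$ is any prime and $N$ is the $n \times n$ nilpotent matrix with $1$'s on the superdiagonal. A direct computation shows $\operatorname{rank}(M - I) = n - 1$ and $(M - I)^n = 0$, so $M$ has a single Jordan block of size $n$ at the eigenvalue $1$ and hence is similar to $J_n$. The key observation is that $M \equiv I \pmod{p}$, so any representation $v = \sum_i M^i d_i$ reduces modulo $p$ to $v \equiv \sum_i d_i \pmod p$. Thus the image $\overline{\mathcal{D}} \subset (\mathbb{Z}/p\mathbb{Z})^n$ must generate the whole group under nonnegative integer combinations, which for prime $p$ means $\overline{\mathcal{D}}$ must span $(\mathbb{Z}/p\mathbb{Z})^n$ as an $\mathbb{F}_p$-vector space. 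Since this space has dimension $n$, we conclude $|\mathcal{D}| \ge n$. Combined with Lemma~\ref{lem:local}, this yields the lower bound.

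The main obstacle is the upper-bound construction: producing $n$ lattice digits in $\mathcal{L}$ whose iterated combinations achieve $\gcd(C_j, D_j) = \operatorname{adj}(B)_{jj} = \prod_{i \ne j} b_{ii}$ in every position $j$, while simultaneously honoring the sign condition $A_j > 0 > B_j$. Unlike the Jordan case of Section~\ref{sec:Jd}, where three digits and a uniform Thue--Morse pattern produce the coprime pair $(2^{k(k+1)/2}, \prod_{\ell}(2^{\ell+1} - 1))$ with $\gcd = 1$, here the digits must cooperate position-by-position to land exactly on the prescribed diagonal divisors of $\operatorname{adj}(B)$; verifying this compatibility is the nontrivial part of the proof.
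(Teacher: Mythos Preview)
Your tightness argument with $M = I + pN$ is correct and is exactly the paper's argument (the paper takes $p=2$).

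The upper bound, however, has two real problems.

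First, your reduction is set up backwards. With $M = P J_n P^{-1}$ and $P = BU$, you get $M = B(U J_n U^{-1})B^{-1}$, so the base on the lattice side is $\tilde J = U J_n U^{-1}$, \emph{not} $J_n$. Lemma~\ref{lem:valid lattice} is stated for $J_n$ and its proof relies on the specific shape of $J_n$ (namely that $J_n(*,\dots,*,a,0,\dots,0)^T = (*,\dots,*,a,0,\dots,0)^T$), so it does not apply to an arbitrary integer matrix similar to $J_n$. The paper avoids this by writing $M = P^{-1} J_n P$ with $P$ integral; then $(M,\D)$ is full on $\Z^n$ iff $(J_n, P\D)$ is full on $P\Z^n = B U \Z^n = B\Z^n$, and now the base really is $J_n$. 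A side benefit of this orientation is that $J_n \mathcal L \subseteq \mathcal L$ forces the diagonal of $B$ to satisfy $b_{j,j}\mid b_{j+1,j+1}$, a structural fact you never use but which the paper's construction exploits.

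Second, and more seriously, your plan to ``generalize the Thue--Morse-like words $W_k, Z_k$'' using the $n$ columns of $\operatorname{adj}(B)$ as digits is not a plan yet. The construction in Section~\ref{sec:Jd} works because the three digits are $p$, $m=-p$, and $z=0$: the morphism $E$ swapping $p\leftrightarrow m$ negates everything, and the zero digit lets you insert spacers. With $n$ arbitrary lattice vectors you have neither an involution $E$ nor a zero digit, and you certainly cannot afford both a column and its negative for each position with only $n$ digits total. You yourself flag achieving $\gcd(C_j,D_j) = \operatorname{adj}(B)_{jj}$ simultaneously for every $j$ as the ``main obstacle''; as written, nothing in the proposal addresses it. The paper does something entirely different: an \emph{induction on $n$}. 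The base case $n=2$ is handled by an explicit two-digit set, split into three cases according to the ratio of the diagonal entries of $B$. For the inductive step, one takes the $n-1$ digits that work for the lower-right $(n-1)\times(n-1)$ sublattice, lifts them to $\Z^n$ by prepending a zero in $B$-coordinates, observes that any nontrivial representation of $0\in\Z^{n-1}$ lifts to a representation of some $(A,0,\dots,0)^T$, and then adds a single new digit $B(\pm1,0,\dots,0)^T$ to handle the top coordinate (with a small tweak if $A=0$). This uses exactly one new digit per dimension and never touches Thue--Morse.
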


\begin{proof}
Let $M = P^{-1}J_n P$ with $P$ integral. Then the equality $z = \sum_{i=0}^N M^id_i$ can be rewritten as $Pz = \sum_{i=0}^N J^iPd_i.$ Thus, $(M,\D)$ is a full number system if and only if $(J_n,P\D)$ is a full number system for $P\Z^n$. By Lemma~\ref{lem:hermite}, $\mathcal L = B\Z^n$ with $B\in\Z^{n\times n}$ upper triangular. Moreover, we have that $J\mathcal L\subseteq\mathcal L$ which implies that the diagonal of $B=(b_{i,j})$ has the property $b_i\mid b_{i+1}$ (consider the equality $JBe_i = Bz$ for $z\in\Z^n$).

We will proceed by induction.
Assume first that $n=2,$ i.e. $B = \begin{pmatrix}a & b\\ 0&c\end{pmatrix}$ with $a,c\neq0.$ We will distinguish three cases.

\begin{enumerate}[label={Case {{\arabic*}}:}]
    \item {\bf $c=2ka$}

Take $\mathcal D=\{B(0,1)^T,B(1,-1)^T\}=\{(b,c)^T,(a-b,-c)^T\}.$ Then we observe that base $J_2$ that
\begin{align*}
    [v_1v_2]_{J_2} & = (a+c,0)^T \\
    [v_2v_1]_{J_2} & = (a-c,0)^T
\end{align*}
We can see that $\gcd(a+c,a-c) = a$, we obtain the statement by Lemma \ref{lem:valid}.

\item {\bf $c=(2k+1)a, k \neq 0, -1$} 

Take $\mathcal D=\{B(1,1)^T,B(1,-1)^T\}=\{(a+b,c)^T,(a-b,-c)^T\}.$ We observe that
\begin{align*}
    [v_1v_2]_{J_2} & = (2a+c,0)^T \\
    [v_2v_1]_{J_2} & = (2a-c,0)^T
\end{align*}
The rest is analogous to the previous case.

\item {\bf $c = \pm a$}

We will do the case $c = a$ only.  The case $c = -a$ is similar.
Take $\mathcal D=\{B(1,1)^T,B(1,-1)^T\}=\}\{(a+b,a)^T,(a-b,-a)^T\}.$
We observe that

\begin{align*}
    [v_2v_1]_{J_2} & = (a,0)^T \\
    [v_2 v_2 v_2 v_1 v_1 v_1]_{J_2} & = (-3a,0)^T
\end{align*}
The rest is analogous to the previous cases.
\end{enumerate}

We will now proceed by induction.
Let $\mathcal{L} = B \mathbb{Z}^n$ and $\mathcal{L'} = B' \mathbb{Z}^{n-1}$ where $B'$ is the $(n-1) \times (n-1)$ lower right sub-matrix of $B$.
We know by induction there exists a size $n-1$ digit set $\mathcal{D}' = \{d_1', d_2', \dots, d_{n-1}'\}$ such that $(J_{n-1}, \mathcal{D}')$ is a full number system for the lattice 
$\mathcal{L}'$.
As $d_i' \in \mathcal{L}'$ we see that there exists a vector $v' \in \mathbb{Z}^{n-1}$ such that $d_i' = B' v'$.
Extend each $d_i'$ in $\mathbb{Z}^{n-1}$ to a vector $d_i$ in $\mathbb{Z}^n$ by prepending $0$ to $v'$ and setting $d_i = B v$.
That is, if $d_i' = B (v_2, v_3, \dots, v_n)^T$ let $d_i = B (0, v_2, v_3, \dots, v_n)^T$.
As $\mathcal{D}'$ if a full number system for the lattice $\mathcal{L}'$ there exists a 
    non-trivial representation of $0 \in \mathbb{Z}^{n-1}$, say $[a_1' a_2' \dots a_k']_{J_{n-1}}$ where $a_i' \in \mathcal{D}'$.
Consider the extended word in $\mathbb{Z}^n$ of $[a_1 a_2 \dots a_k]_{J_n} = (A, 0, \dots, 0)^T$.
Assume without loss of generality that the upper left entry of $B$ is positive.
If $A > 0$ then let $d_n = B (-1, 0, \dots, 0)^T$.  In this case $(J_n, \{d_1, \dots, d_n\}$ is a full number system for the lattice $\mathcal{L}.$
If $A < 0$ then let $d_n = B (1, 0, \dots, 0)^T$.  Again, $((J)_n, \{d_1, \dots, d_n\})$ is a full number system for the lattice $\mathcal{L}$.

If $A = 0$ then consider any digit $d_i$ which occurs a non-zero number of times in the 
   expression $[a_1 a_2 \dots a_k]_{J_n}$.
Let $d_i^* = B (1, v_2, v_3, \dots, v_n)^T$ and redefine $\mathcal{D} = \{d_1, d_2, \dots, d_{i-1}, d_i^*, d_{i+1}, \dots, d_n\}$.
We see in this case that the representation $[a_1 \dots a_k]_{J_n} = (A, 0, \dots, 0)^T$ with
    $a_i \in \mathcal{D}$ has $A \neq 0$ and the proof follows as before.

On the other hand, consider the $n \times n$ matrix
\[
M = \begin{pmatrix}
1 & 2 & 0 & \dots & \dots & 0 \\
0 & 1 & 2 & 0 & \dots & 0 \\
\vdots & \ddots & \ddots & \ddots & \ddots & \vdots \\
0 & \dots & 0 & 1 & 2 & 0 \\
0 & \dots & \dots & 0 & 1 & 2 \\
0 & \dots & \dots & \dots & 0 & 1
\end{pmatrix}
\]
It is quick to check that $M$ is similar to a $J_n$. Let $\mathcal{D} = \{v_1, v_2, v_3,\dots,v_k\}$ be any $k$-digit alphabet.
We see that $M \equiv M^r \bmod 2$ for all $r$.
Hence in $\Z_2^n$ that the set of representable numbers
  is restricted to $\{ a_1 v_1 + a_2 v_2 + \dots + a_k v_k: a_j \in \Z_2\}.$
This has size at most $2^k$, whereas the size of $\Z_2^n$ is $2^n$.  Therefore we must have $k\geq n$.

\end{proof}

\begin{example}
Start with 
\[
M = \begin{pmatrix}
-1 & 2 & 0 & 0 \\
-2 & 3 & 0 & 0 \\
-2 & 0 & -1 & 2 \\
-3 & 2 & -2 & 3
\end{pmatrix}
\]
We have $M = P^{-1} J_4 P$ where 
\[
P = \begin{pmatrix}
2 & -1 & 0 & -1 \\
1 & 0 & 2 & -2 \\
0 & -2 & 0 & 0 \\
4 & -4 & 0 & 0
\end{pmatrix}\]
Next we have that $P = BU$ where
\[
B = 
\begin{pmatrix}
1 & 0 & 0 & 1  \\
0 & 2& 1 & 1  \\
0 & 0 & 2 & 0 \\
0 & 0 & 0 & 4
\end{pmatrix}
\text{  and  }
U = \begin{pmatrix}
1 & 0 & 0 & -1 \\
0 & 1 & 1 & -1 \\
0 & -1 &  0 & 0 \\
1 & -1 & 0 & 0
\end{pmatrix}
\]

We will define 
\[
B' = \begin{pmatrix} 2 & 1 & 1 \\ 0 & 2 & 0 \\ 0 & 0 & 4 \end{pmatrix}, \text{  } B'' = \begin{pmatrix} 2 & 0 \\ 0 & 4 \end{pmatrix}, \text{  } \mathcal{L}' = B' \mathbb{Z}^3,
\text{  and  } \mathcal{L}'' = B'' \mathbb{Z}^2
\]
By first considering $B'' = \begin{pmatrix} 2& 0  \\ 0 & 4 \end{pmatrix}$ we see that this follows from the first case.
Take $\mathcal{D}'' = \{v_3'', v_4''\} = \{(0, 4)^T, (2, -4)^T\}$.
We note that $v_3'' = B'' (0,1)^T, v_4'' = B'' (1,-1)^T \in \mathcal{L}''$.
We check that $[v_3'' v_4''] = (6, 0)^T$ and $[v_4'' v_3''] = (-2, 0)^T$.
Hence $(J_2, \mathcal{D}'')$ is a full number system for 
     the lattice $\mathcal{L}''$.
     
We see that a non-trivial representation of $(0,0)^T$ is 
$[v_3'' v_4'' v_4'' v_3'' v_4'' v_3'' v_4'' v_3'']$.

Consider the digit set $\{v_3', v_4'\} = \{(1, 0, 4)^T, (0, 2, -4)^T\}$.
Here we have that $v_3' = B' (0,0,1)^T$ and $v_4 = B' (0,1,-1)^T$.

We see that $[v_3' v_4' v_4' v_3' v_4' v_3' v_4' v_3'] = (34, 0, 0)^T$.
Take $v_2' = (-2, 0, 0)^T = B' (-1, 0, 0)^T$ and set $\mathcal{D}' = \{v_2', v_3', v_4'\}$.
We have that $(J_3, \mathcal{D}')$ is a full number system for the 
    lattice $\mathcal{L}'$.

We see that a non-trivial representation of $(0,0,0)^T$ is 
$[v_3' v_4' v_4' v_3' v_4' v_3' v_4' v_3'
\underbrace{v_2' \dots v_2'}_{17}]$.

Consider the digit set $\{v_2, v_3, v_4\} = \{(0,-2, 0,0)^T, (1,1,0,4)^T,(-1, 0,2, -4)^T\}$.  
Here $v_2 = B (0,-1,0,0)^T, v_3 = B (1,1,0,4)^T, v_4 = B(0,0,1,-1)^T \in \mathcal{L}$.
We see that 
$[v_3 v_4 v_4 v_3 v_4 v_3 v_4 v_3
\underbrace{v_2 \dots v_2}_{17}] = (407, 0, 0, 0)^T$.
Take $v_1 = [-1,0,0,0] = B (-1, 0, 0, 0)^T$ 
and set $\mathcal{D} = \{v_1, v_2, v_3, v_4\}$.
We then see that $(J_4, \mathcal{D})$ is a full number system
    for $\mathcal{L}$.
    
We next denote that 
\begin{align*}
    w_1 & = P^{-1} v_1 = (0,0,1,1)^T \\
    w_2 & = P^{-1} v_2 = (0,0,-1,0)^T \\
    w_3 & = P^{-1} v_3 = (1,0,1,1)^T \\
    w_4 & = P^{-1} v_4 = (-2,-1,-1,-2)^T
\end{align*}
From Theorem \ref{thm:similar} we have that $(M, \{w_1, w_2, w_3, w_4\})$ is a full number system.
\end{example}

\section{Final comments, open question and conjectures}
\label{sec:conc}

For $J_2$ and a digit set $(0,1)^T, (0,-1)^T$ we know that we can represent every value $(a,b)$.
What is not clear is how to find the shortest representation.  
It is known that such a representation is not necessarily unique.  
For example 
\[ \begin{pmatrix} 5 \\ -1 \end{pmatrix} = [v_1 v_2 v_2 v_1 v_2] = [v_2 v_1 v_1 v_2 v_2] \]
are both representations of minimal length.
It would be interesting to find an algorithm that would provide a shortest representation.

In Example \ref{example:J2} we gave a 2-digit alphabet for $J_2$.
In Example \ref{example:J3w} we indicated how to find a 2-digit alphabet for the $J_3$ and $J_4$. 
In Section
    \ref{sec:similar} we showed that there exists a 3-digit alphabet for $J_5$.
It is clear that there does not exist a 1-digit alphabet for $J_5$ by Lemma \ref{lem:cone}.
What is not clear is if the 3-digit alphabet is an alphabet of minimal size for $J_5$.
More generally, does there exist a $2$-digit alphabet for $J_n$ for any $n \geq 2$?

In most of this paper we focused on the case where $M$ was a Jordan block of the form $J_n$ for some $n$.
In Section \ref{sec:similar} we discussed when $M$ was similar to a Jordan block.
It would be interesting to know what happens more generally.  
That is, what if $M$ has a decomposition into multiple non-expansive Jordan blocks.
An obvious digit set would be the cross product of the digits sets for the various blocks.
What is not clear is if the non-uniqueness of expansions would allow for a smaller digit set.

In Section \ref{sec:CFL} we focused on the case of $M = J_2$ with a specific digit set.  It would be interesting to know if this result was true in greater generality.

\bibliographystyle{plain}

\end{document}